\newcommand{\Gpre}{G}
\newcommand{\T}{^{\scriptscriptstyle\top}}
\newcommand{\zbar}{\bar{z}}
\newcommand{\Gc}{\mathscr{G}}
\newcommand{\R}{\mathbb{R}}
\newcommand{\Z}{\mathbb{Z}}
\DeclareMathOperator{\sgn}{sgn}
\newcounter{cond}
\newcommand{\condition}[1]
	{\stepcounter{cond}\tag{C\arabic{cond}}\label{#1}}
\newtheorem*{Motzkin}
	{Motzkin's Theorem {\mdseries\cite{MR0223384}*{Theorem~8}}}
\newtheorem*{Farkas}{Farkas' Lemma}
\newtheorem{theorem}{Theorem}
\newtheorem{lemma}[theorem]{Lemma}
\theoremstyle{definition}
\newtheorem*{remark}{Remark}
\let\bmin\wedge
\let\empty\varnothing
\title[Matrices with cyclicly decreasing rows]{On the class of matrices with 
rows that weakly decrease cyclicly from the diagonal}
\author{Wouter Kager$^\dagger$}
\address{$^\dagger$Vrije Universiteit Amsterdam, Department of Mathematics,
	Amsterdam, the Netherlands}
\author{Pieter Jacob Storm$^\ddagger$}
\address{$^\ddagger$Eindhoven University of Technology,
	Department of Mathematics and Computer Science,
	Eindhoven, the Netherlands}
\keywords{Matrix class; Determinant sign; Directed graph; $P$-matrix}
\subjclass[2020]{15A06; 15A15; 15B99; 05C50}
\date{\today}
\begin{document}

\begin{abstract}
	We consider $n\times n$ real-valued matrices~$A = (a_{ij})$ satisfying 
	$a_{ii} \geq a_{i,i+1} \geq \dots \geq a_{in} \geq a_{i1} \geq \dots \geq 
	a_{i,i-1}$ for $i=1,\dots,n$. With such a matrix~$A$ we associate a 
	directed graph~$\Gc(A)$. We prove that the solutions to the system $A\T x 
	= \lambda \, e$, with $\lambda \in\R$ and $e$ the vector of all ones, are 
	linear combinations of `fundamental' solutions to~$A\T x=e$ and vectors 
	in~$\ker A\T$, each of which is associated with a closed strongly 
	connected component (SCC) of~$\Gc(A)$. This allows us to characterize the 
	sign of~$\det A$ in terms of the number of closed SCCs and the solutions 
	to~$A\T x = e$. In addition, we provide conditions for $A$ to be a 
	$P$-matrix.
\end{abstract}

\maketitle

\section{Introduction}

We consider the class of real $n\times n$ matrices with rows that weakly 
decrease in a cyclic fashion from the diagonal, i.e., matrices $A = (a_{ij})$ 
that satisfy
\begin{equation}
	\condition{cnd: non-incr}
	a_{ii} \geq a_{i,i+1} \geq \dots \geq a_{in}
	\geq a_{i1} \geq \dots \geq a_{i,i-1}, \qquad
	i=1,\dots,n.
\end{equation}
Our interest lies in characterizing~$\sgn(\det A)$, the sign of the 
determinant of such matrices, with the convention that $\sgn 0 = 0$. In our 
exposition, we regard the dimension~$n$ as fixed and denote the vector of all 
ones by~$e$.

\subsection{Statement of the main result}
\label{sec: main result}

Consider an $n\times n$ matrix~$A$ that satisfies~\eqref{cnd: non-incr}. We 
say there is a \emph{gap} right before the matrix element~$a_{ij}$ if $a_{ij}$ 
is strictly smaller than the previous element~$a_{i,j-1}$ ($a_{in}$ in case 
$j=1$) in the same row. Let $K = (\kappa_{ij})$ be the 0\,--1 matrix obtained 
from~$A$ by replacing each matrix element right after a gap by a~1, and every 
other element by a~0, that is, $K$ is defined by
\[
	\kappa_{ij} := \begin{cases}
		1 & \text{if $j=1$ and $a_{in} > a_{i1}$,
			or $2\leq j\leq n$ and $a_{i,j-1} > a_{ij}$}; \\
		0 & \text{otherwise.}
	\end{cases}
\]

We interpret~$K$ as the adjacency matrix of a directed graph with vertex 
set~$\{1,\dots,n\}$ and a directed edge from~$i$ to~$j$ if and only if 
$\kappa_{ij}=1$, and call this graph~$\Gc(A)$. We say two distinct vertices 
$i$ and~$j$ are \emph{strongly connected} if there are paths in~$\Gc(A)$ from 
$i$ to~$j$ and from $j$ to~$i$, and declare each vertex~$i$ to be strongly 
connected to itself. This defines an equivalence relation on the vertex set, 
the equivalence classes of which we call \emph{strongly connected components} 
(SCCs). A strongly connected component~$C$ is called \emph{open} if the 
graph~$\Gc(A)$ contains a directed edge from a vertex in~$C$ to a vertex not 
in~$C$, and \emph{closed} otherwise. Necessarily, at least one of the SCCs of 
the graph~$\Gc(A)$ has to be closed. Figure~\ref{fig: directed graph} 
illustrates these definitions.

The main result of this paper is that one of the following four possibilities 
must apply to the matrix~$A$:
\begin{enumerate}
	\item[1.] There are at least two closed SCCs and $\det A=0$.
	\item[2.] There is exactly one closed SCC and either
		\begin{enumerate}
			\item the system $A\T x=e$ has no solution and $\det A=0$;
				or
			\item the system $A\T x=e$ has a solution $x\geq0$ and $\det A>0$;
				or
			\item the system $A\T x=e$ has a solution $x\leq0$ and $\det A<0$.
		\end{enumerate}
\end{enumerate}
We also show using Farkas' lemma that 2(a) and~2(c) are impossible if $Ae>0$, 
while 2(a) and~2(b) are ruled out if $Ae<0$. So if the row sums of~$A$ are all 
strictly positive or all strictly negative, then whether or not $A$ is 
singular is determined by the number of closed SCCs of~$\Gc(A)$.

\begin{figure}
	\begin{center}
		$\displaystyle
			A = \begin{bmatrix} \,
				2 & 1 & 1 & 1 & 0 \\
				0 & 1 & 1 & 0 & 0 \\
				2 & 1 & 2 & 2 & 2 \\
				2 & 1 & 0 & 2 & 2 \\
				1 & 1 & 1 & 0 & 2 \,
			\end{bmatrix}
		\quad
			K = \begin{bmatrix} \,
				0 & 1 & 0 & 0 & 1 \\
				0 & 0 & 0 & 1 & 0 \\
				0 & 1 & 0 & 0 & 0 \\
				0 & 1 & 1 & 0 & 0 \\
				1 & 0 & 0 & 1 & 0 \,
			\end{bmatrix}
		\quad
		\parbox[0]{95bp}{\includegraphics{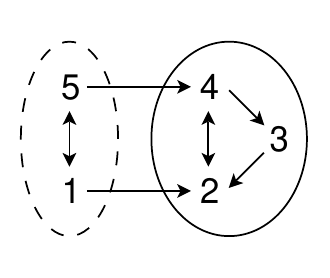}}$
	\end{center}
	\begin{center}
		$\displaystyle
			A = \begin{bmatrix} \,
				2 & 2 & 2 & 1 & 1 \\
				1 & 3 & 3 & 2 & 2 \\
				0 & 0 & 1 & 1 & 0 \\
				2 & 2 & 2 & 2 & 2 \\
				1 & 1 & 0 & 0 & 1 \,
			\end{bmatrix}
		\quad
			K = \begin{bmatrix} \,
				0 & 0 & 0 & 1 & 0 \\
				1 & 0 & 0 & 1 & 0 \\
				0 & 0 & 0 & 0 & 1 \\
				0 & 0 & 0 & 0 & 0 \\
				0 & 0 & 1 & 0 & 0 \,
			\end{bmatrix}
		\quad
		\parbox[0]{95bp}{\includegraphics{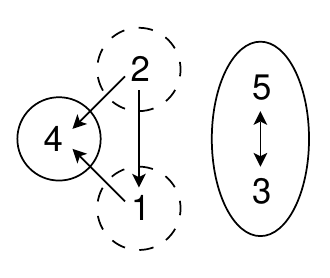}}$
	\end{center}
	\caption{Two matrices~$A$ in the class~\eqref{cnd: non-incr} with the 
	corresponding adjacency matrix~$K$ and directed graph~$\Gc(A)$. The open 
	strongly connected components are encircled with a dashed line, the closed 
	ones with a solid line. For the first matrix we have $\det A>0$, for the 
	second $\det A=0$.}
	\label{fig: directed graph}
\end{figure}

Our main result generalizes the following theorem by T.S.\ Motzkin:

\begin{Motzkin}
	If the real $n\times n$ matrix~$A$ is non-negative and 
	satisfies~\eqref{cnd: non-incr}, then $\det A\geq0$, and if the 
	inequalities in~\eqref{cnd: non-incr} are strict, then $\det A>0$.
\end{Motzkin}

Compared to Motzkin's theorem, in the case of a non-negative matrix~$A$ our 
result provides a precise condition on which of the inequalities 
in~\eqref{cnd: non-incr} need to be strict for~$\det A$ to be strictly 
positive. Moreover, our result also covers matrices in the class~\eqref{cnd: 
non-incr} with negative entries.

Although we focus on matrices with weakly decreasing rows, our analysis can 
also be used if the matrix~$A$ has rows that either weakly decrease or weakly 
increase from the diagonal in a cyclic fashion. Indeed, for such a matrix 
there exists a diagonal matrix~$D$ with diagonal entries $1$ and~$-1$ such 
that the product~$DA$ satisfies~\eqref{cnd: non-incr}. The analysis of our 
paper applied to the matrix~$DA$ then translates into results for the 
matrix~$A$. A somewhat related class of matrices with rows that decrease from 
the diagonal in both directions has been studied by Rousseau 
in~\cite{MR1674216}.

\subsection{Application and motivation}
\label{sec: application}

Our motivation for considering the class of matrices satisfying~\eqref{cnd: 
non-incr} comes from a model used to study particle flows in ring-topology 
networks. Such network structures occur for instance in traffic and 
communication networks. Here we describe only the main characteristics of the 
model; we refer to~\cite{storm22} for a more detailed account of the dynamics 
and an extended motivation for studying ring-topology networks.

The system consists of $n$~stations and $n$~cells arranged in a ring, with 
cell~1 followed by station~1, cell~2, station~2, etc., up to station~$n$. Each 
cell can contain at most one particle at a time. At each station~$i$, 
particles arrive from outside at a fixed rate~$p_i$, and are placed in a 
buffer before they can enter the ring. At integer times, each station~$i$ can 
perform one of three actions: if cell~$i$ contains a particle, the station 
either forwards this particle to the next cell or removes it from the ring; if 
cell~$i$ is empty but the buffer at station~$i$ is not, the station moves one 
particle from the buffer to the next cell; otherwise, the station does 
nothing. This is illustrated in Fig.~\ref{fig: model}.

\begin{figure}
	\includegraphics[scale=0.94]{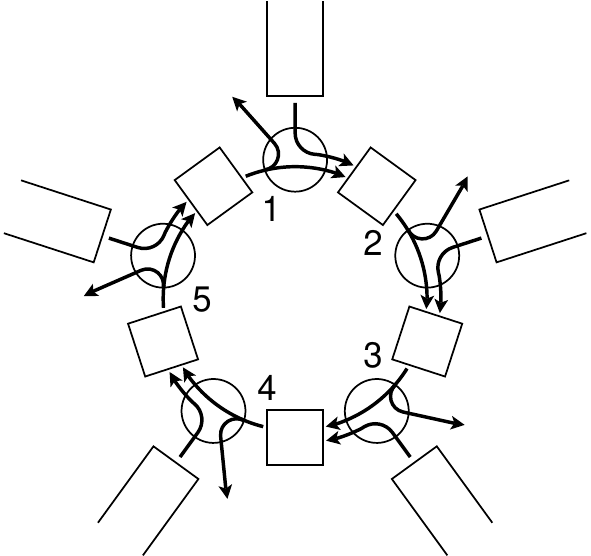}
	\caption{The $n=5$ ring-topology network. The circles are the stations, 
	the squares represent the cells, and the arrows correspond to the three 
	actions each station can perform at integer times (see Section~\ref{sec: 
	application} for details).}
	\label{fig: model}
\end{figure}

Consider a particle moving onto the ring at station~$i$. We assume the time it 
spends on the ring before being removed follows a probability distribution 
depending only on~$i$. Let $a_{ij}$ be the expected number of times the 
particle passes through station~$j$, and let $b_{ij}$ be the expected number 
of times it occupies cell~$j$. Then, by the nature of the model, the $n\times 
n$ matrix~$A = (a_{ij})$ satisfies~\eqref{cnd: non-incr} and has the relation 
$A = I+B$ with the matrix~$B = (b_{ij})$.

We are interested in determining the possible stationary particle flows 
through the network. To this end, suppose that for each cell~$i$ there is a 
stationary rate~$\pi_i$ at which the cell is vacant. Then the stationary rate 
at which station~$i$ moves particles from the buffer onto the ring is the 
smaller of $\pi_i$ and~$p_i$, since the station can only move a particle onto 
the ring when cell~$i$ is empty. Using that $b_{ij}$ is the expected number of 
times such a particle occupies cell~$j$, it follows that the vector~$\pi = 
(\pi_i)$ has to satisfy
\begin{equation}
	\label{eqn: throughput}
	\pi = e - B\T (\pi\bmin p),
\end{equation}
where $p = (p_i)$ is the vector of arrival rates, and $\pi\bmin p$ is the 
component-wise minimum of $\pi$ and~$p$. This is called the throughput 
equation. Its solutions for~$\pi$ characterize all the candidates for a 
stationary particle flow.

In a separate paper~\cite{kager22} we show that every solution to~\eqref{eqn: 
throughput} can be expressed in terms of the solutions to the system $A\T x=e$ 
and analogous systems for principal submatrices of~$A$. A solution always 
exists, but whether it is unique depends on the invertibility of these 
matrices. In particular, we prove in~\cite{kager22} that the throughput 
equation has a unique solution for every vector~$p$ if and only if $A$ is a 
$P$-matrix. The current paper supplements~\cite{kager22} by studying the 
solutions to the system $A\T x= \lambda\,e$, for any $\lambda \in\R$, and 
characterizing the sign of~$\det A$ (as explained above) for matrices in the 
class~\eqref{cnd: non-incr}, in Section~\ref{sec: results}. In 
Section~\ref{sec: P-matrix} we provide conditions under which (non-negative) 
matrices in this class are $P$-matrices, considering their importance for our 
application. Section~\ref{sec: conclusions} summarizes the results and 
conclusions of the paper.

\subsection{Notation and terminology}
\label{sec: notation}

Vectors and matrices are real-valued, all vectors are column vectors, and 
vector and matrix inequalities are to be read component-wise. The typical 
matrix and vector dimensions are $n\times n$ and~$n$. Given any integer~$i$, 
we write~$[i]$ for the unique index in~$\{1,\dots,n\}$ for which $i = kn + 
[i]$ is true for some integer~$k$. In particular, $[kn+i] = i$ whenever $i\in 
\{1, \dots, n\}$ and $k\in\Z$. Also, for any two indices~$i,j$ in the 
set~$\{1, \dots, n\}$, $d(i,j)$ denotes the unique number~$k$ in~$\{0, \dots, 
n-1\}$ for which $[i+k] = j$. That is, $d(i,j)$ is the number of indices you 
have to move forward in the cyclic order on~$\{1, \dots, n\}$ to reach~$j$ 
starting from~$i$.

The $n \times n$ matrix of all ones is denoted by~$E$. If $A$ is an $n\times 
n$ matrix and $I,J$ are subsets of $\{1,\dots,n\}$, then $[A]_{IJ}$ is the 
submatrix of~$A$ consisting of the entries~$(\, a_{ij}\colon i\in I, j\in J 
\,)$. Similarly, if $x$ is an $n$-dimensional vector, then $[x]_I$ is the 
subvector consisting of the components~$(\, x_i\colon i\in I\,)$.

A matrix~$A$ is called: a \emph{$P$-matrix} if all its principal minors are 
strictly positive; a \emph{$Z$-matrix} if $a_{ij}\leq0$ for $i\neq j$; an 
\emph{$M$-matrix} if $A$ can be expressed in the form $sI-B$ with $B\geq0$ and 
$s\geq \rho(B)$; \emph{semi-positive} if $Ax>0$ for some vector~$x>0$. A 
result we use is that a semi-positive $Z$-matrix is a non-singular 
$M$-matrix~\cite {MR444681}* {Theorem~1, Condition~K33}.

Finally, throughout the paper, $K = (\kappa_{ij})$ will be the adjacency 
matrix of the directed graph~$\Gc(A)$ associated with the matrix~$A$ under 
consideration, as defined in Section~\ref{sec: main result}.

\section{Main results and proofs}
\label{sec: results}

The basis of our approach is an investigation into the solutions of the system 
$A\T x = \lambda\,e$, with $\lambda \in\R$. We first show that these solutions 
are zero outside the union of the closed SCCs of the graph~$\Gc(A)$. We then 
investigate the `fundamental' solutions to~$A\T x=e$ that are characterized by 
the fact that they are supported on a single closed SCC. We prove that the 
solutions to $A\T x = \lambda\,e$ are linear combinations of these fundamental 
solutions and vectors in~$\ker A\T$ that also are associated with a particular 
closed SCC. We then establish that every closed SCC admits at most one 
fundamental solution, which is either non-positive or non-negative. This 
finally leads to the characterization of the sign of~$\det A$ stated in 
Section~\ref{sec: main result}.

\subsection{The support of solutions}

We begin our exposition by showing that the solutions to $A\T x = \lambda\,e$ 
are supported on the closed SCCs of~$\Gc(A)$. To do so, we make use of the 
notations introduced in Section~\ref{sec: notation}.

\begin{lemma}
	\label{lem: support}
	Let $\lambda \in \R$. Suppose the real $n\times n$ matrix~$A$ 
	satisfies~\eqref{cnd: non-incr} and the vector~$x$ solves $A\T x = \lambda 
	\, e$. Let $D$ be the union of the open SCCs of the graph~$\Gc(A)$, and 
	suppose $D$ is not empty. Then $[x]_D = 0$.
\end{lemma}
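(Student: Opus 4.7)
The plan is to convert the system $A\T x = \lambda\,e$ into a local flow-balance system on~$\Gc(A)$, and then induct through the open SCCs in topological order.

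First I would take consecutive cyclic column differences of $A\T x = \lambda\,e$. Condition~\eqref{cnd: non-incr} makes the row differences $a_{ij} - a_{i,j-1}$ (with $j-1$ cyclic) non-positive for $i\neq j$, and the relation $\sum_i (a_{ij} - a_{i,j-1}) x_i = 0$ rearranges into the balance equation
\[
    \sum_{i\colon i\to j} g_{ij}\, x_i \;=\; \Delta_j\, x_j,
    \qquad j=1,\dots,n,
\]
where $g_{ij} := a_{i,j-1} - a_{ij}$ is strictly positive exactly when $i\to j$ is an edge of~$\Gc(A)$, and $\Delta_j := a_{jj} - a_{j,j-1}$. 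A cyclic telescoping identifies $\Delta_i$ as the total out-weight $\sum_{j\neq i} g_{ij}$, so $\Delta_j > 0$ precisely when $j$ has an outgoing edge in~$\Gc(A)$ --- in particular whenever $j$ lies in an open SCC, since a vertex with no outgoing edges would constitute a closed singleton SCC by itself.

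Second I would enumerate the open SCCs of~$\Gc(A)$ as $C_1, \dots, C_s$ in a topological order (any edge between two open SCCs going from a lower- to a higher-index component), and prove $[x]_{C_{k_0}} = 0$ by induction on~$k_0$. Any predecessor of $j\in C_{k_0}$ lying outside $C_{k_0}$ comes from some SCC possessing an outgoing edge; such an SCC cannot be closed, so it must be an open $C_k$ with $k<k_0$, and by the inductive hypothesis its contribution to the balance equation at~$j$ vanishes. What remains is $M\,[x]_{C_{k_0}} = 0$, where $M$ is the $|C_{k_0}|\times|C_{k_0}|$ matrix with diagonal entries $\Delta_j$ and off-diagonal entries~$-g_{ij}$.

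The main obstacle is showing this restricted matrix~$M$ is non-singular. It is a $Z$-matrix with strictly positive diagonal, and it is irreducible because $\Gc(A)$ restricted to~$C_{k_0}$ is strongly connected. Using the telescoping identity, each column sum of~$M$ evaluates to $\sum_{j\notin C_{k_0}} g_{ij}$, which is non-negative for all columns and, since $C_{k_0}$ is open, strictly positive for at least one column. A brief Perron--Frobenius argument on $M\T = sI - B$ (with $B\geq 0$ irreducible and $s$ sufficiently large), combined with this column-sum inequality, yields $\rho(B) < s$, so $M\T u > 0$ for the right Perron eigenvector $u > 0$ of~$B$. Hence $M\T$ is semi-positive, and the result noted in Section~\ref{sec: notation} (a semi-positive $Z$-matrix is a non-singular $M$-matrix) makes $M$ invertible. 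Therefore $[x]_{C_{k_0}} = 0$, completing the induction and giving $[x]_D = 0$.
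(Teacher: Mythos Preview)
Your argument is correct, and it shares its opening move with the paper: both take cyclic column differences of $A\T x=\lambda e$ to obtain a homogeneous system governed by the $Z$-matrix with entries $a_{ij}-a_{i,[j-1]}$, and both finish by invoking the fact that a semi-positive $Z$-matrix is a non-singular $M$-matrix. The route to that last invocation, however, is genuinely different.

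The paper does not split $D$ into its individual open SCCs. Instead it observes in one stroke that no edge of~$\Gc(A)$ runs from a closed SCC into~$D$, so the full block $[M]\T_{DD}$ already annihilates~$[x]_D$. Since $[M]_{DD}$ need not be irreducible, the paper cannot appeal to Perron--Frobenius; it instead builds a semi-positivity witness $z>0$ with $[M]_{DD}z>0$ by an explicit recursion, peeling off one vertex of~$D$ at a time using the fact that every non-empty subset of~$D$ has an edge leaving it (otherwise it would contain a closed SCC).

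Your approach trades that hands-on construction for structure: by processing the open SCCs in topological order, each restricted block is irreducible, so the standard Perron--Frobenius bound (column sums non-negative and not all zero $\Rightarrow$ spectral radius strictly below the diagonal shift) hands you the semi-positive vector for free. This is cleaner and more conceptual, at the cost of importing Perron--Frobenius; the paper's version is more self-contained but requires the bespoke recursive construction. One small point worth making explicit in your write-up: when $|C_{k_0}|=1$ the Perron--Frobenius step is vacuous, but the block is then just the positive scalar~$\Delta_j$, so invertibility is immediate.
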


\begin{proof}
	Let~$C$ be the union of the closed SCCs of the graph~$\Gc(A)$, so that $C 
	= \{1,\dots,n\} \setminus D$, and define the $n\times n$ matrix~$M = 
	(m_{ij})$ by
	\[
		m_{ij} := a_{ij} - a_{i[j-1]},
		\qquad i,j = 1,\dots,n.
	\]
	Observe that if $i\in C$ and~$j\in D$, then we must have $m_{ij} = 0$ 
	because there cannot be an edge in the graph~$\Gc(A)$ from $i$ to~$j$. It 
	follows that, for $j\in D$,
	\[\begin{split}
		\sum_{i\in D} m_{ij}\,x_i
		= \sum_{i=1}^n m_{ij}\,x_i
		&= \sum_{i=1}^n a_{ij}\,x_i - \sum_{i=1}^n a_{i[j-1]}\,x_i \\
		&= (A\T x)_j - (A\T x)_{[j-1]} = 0,
	\end{split}\]
	hence $[M]\T_{DD} [x]_D = 0$. We claim that $[M]_{DD}$ is a non-singular 
	$M$-matrix, and since this implies $[x]_D = 0$ it only remains to prove 
	this claim.

	Note that $Me=0$ by the definition of~$M$. It is now easy to see 
	that~$[M]_{DD}$ is a $Z$-matrix with strictly positive diagonal elements: 
	by \eqref{cnd: non-incr} we have $m_{ij} \leq 0$ whenever $i\neq j$ with 
	strict inequality if $\kappa_{ij}=1$, and for $i$ in an open SCC 
	of~$\Gc(A)$ we then obtain $m_{ii} > 0$ from $(Me)_i=0$ and the fact that 
	there must be \emph{some} index~$j$ other than~$i$ such that $\kappa_{ij} 
	= 1$. As for the row sums of the matrix~$[M]_{DD}$, $Me = 0$ together with 
	$m_{ij} \leq 0$ for $i\neq j$ yields
	\[
		[M]_{DD}[e]_D
		= [Me]_D - [M]_{DC}[e]_C
		= - [M]_{DC}[e]_C
		\geq 0.
	\]
	So each row sum of~$[M]_{DD}$ is non-negative, and the sum of the $i$-th 
	row is strictly positive if $\kappa_{ij}=1$, and hence $m_{ij}<0$, for 
	some~$j$ in~$C$.

	Let $d$ be the number of elements in~$D$. Note that every non-empty 
	subset~$I$ of~$D$ (including $I=D$) contains an index~$i$ for which there 
	is some index~$j$ outside~$I$ with $\kappa_{ij}=1$, for if not, then $I$ 
	would contain a closed SCC of~$\Gc(A)$. This allows us to recursively 
	choose the indices $i_1, \dots, i_d$ so that they are distinct and satisfy 
	the following properties: (i) $\{i_1, \dots, i_d\} = D$, (ii) for $i_1$ 
	there is an index~$j_1$ in~$C$ such that $\kappa_{i_1 j_1}=1$, and~(iii) 
	for $k = 2,\dots,d$ there is an index~$j_k$ in~$C \cup \{i_1, \dots, 
	i_{k-1}\}$ such that $\kappa_{i_k j_k}=1$. From our observations in the 
	previous paragraph we then see that for each~$i_k$ we have
	\begin{equation}
		\label{eqn: M-matrix1}
		\sum_{\ell=1}^d m_{i_ki_\ell} > 0
		\quad\text{or}\quad
		m_{i_ki_\ell} < 0
		\quad\text{for some~$\ell$ in~$\{1,\dots,k\} \setminus \{k\}$}.
	\end{equation}

	We now define the vector~$z = (z_i\colon i\in D)$ recursively by setting
	\[
		z_{i_k}
		:= 1 - \frac1{2m_{i_ki_k}} \Biggl(\,
				\sum_{\ell=1}^{k-1} m_{i_ki_\ell}\,z_{i_\ell}
				+ \sum_{\ell=k}^d m_{i_ki_\ell}
			\Biggr), \quad k = 1,\dots,d,
	\]
	where for $k=1$ the first sum is zero. We claim that by induction in~$k$, 
	it follows from this definition that for $k = 1,\dots,d$,
	\[
		0
		< \sum_{\ell=1}^{k-1} m_{i_ki_\ell}\,z_{i_\ell}
			+ \sum_{\ell=k}^d m_{i_ki_\ell}
		\leq m_{i_ki_k}
		< 2m_{i_ki_k}
		\quad\text{and}\quad
		0<z_{i_k}<1.
	\]
	Indeed, this is not difficult to verify using~\eqref{eqn: M-matrix1} and 
	the fact that $[M]_{DD}$ is a $Z$-matrix with strictly positive diagonal 
	elements and non-negative row sums. But then it also follows that for each 
	index~$i_k$,
	\[\begin{split}
		\sum_{\ell=1}^d m_{i_ki_\ell} \, z_{i_\ell}
		&\geq \sum_{\ell=1}^{k-1} m_{i_ki_\ell} \, z_{i_\ell}
			+ \sum_{\ell=k}^d m_{i_ki_\ell}
			- m_{i_ki_k} \, ( 1-z_{i_k} ) \\
		&= 2m_{i_ki_k} ( 1-z_{i_k} ) - m_{i_ki_k} ( 1-z_{i_k} ) > 0.
	\end{split}\]
	So $z>0$ and $[M]_{DD}\,z>0$, hence $[M]_{DD}$ is semi-positive. Since 
	every semi-positive $Z$-matrix is a non-singular $M$-matrix, we are 
	done.
\end{proof}

\subsection{The fundamental solutions}

We now start our investigation into the fundamental solutions to $A\T x=e$ and 
their relation with the solutions to the general system $A\T x = \lambda\,e$ 
with $\lambda\in\R$. We begin with a lemma concerning the non-negative 
solutions to the general system, in the proof of which we use the definition 
of the numbers~$d(i,j)$ from Section~\ref{sec: notation}.

\begin{lemma}
	\label{lem: class support}
	Let $\lambda \in \R$. Suppose the real $n\times n$ matrix~$A$ 
	satisfies~\eqref{cnd: non-incr} and the vector~$x$ is a solution to the 
	system $A\T x = \lambda \, e$ with $x\geq0$ and $x\neq0$. Then there is at 
	least one closed SCC of~$\Gc(A)$, say~$C$, such that $[x]_C > 0$.
\end{lemma}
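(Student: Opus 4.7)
The plan is to use Lemma~\ref{lem: support} to confine the support of $x$ to the union of the closed SCCs, and then show that on any closed SCC on which $x$ is not identically zero, it is in fact strictly positive. By Lemma~\ref{lem: support}, $x_i = 0$ whenever $i$ lies in an open SCC of $\Gc(A)$, so the support of $x$ is contained in the union of the closed SCCs. Since $x \neq 0$, there exists at least one closed SCC $C$ with $[x]_C \neq 0$, and it suffices to prove $[x]_C > 0$ for any such $C$.

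Reusing the matrix $M = (m_{ij})$ defined by $m_{ij} = a_{ij} - a_{i[j-1]}$ from the proof of Lemma~\ref{lem: support}, subtracting consecutive components of $A\T x = \lambda\,e$ yields $M\T x = 0$. For each $j \in C$ I would then reduce the identity $\sum_{i=1}^n m_{ij}\,x_i = 0$ to
\[
	\sum_{i \in C} m_{ij}\,x_i = 0.
\]
Indices $i$ lying in open SCCs contribute zero because $x_i = 0$, and indices $i$ lying in a closed SCC $C'$ distinct from $C$ contribute zero because closedness of $C'$ forces $\kappa_{ij} = 0$ for $j \notin C'$, whence $m_{ij} = 0$. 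In matrix form this reads $[M]\T_{CC}\,[x]_C = 0$.

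The final step is a combinatorial cut argument. Let $S = \{i \in C : x_i > 0\}$, which is nonempty by assumption. Suppose for contradiction that $S \neq C$ and fix $j \in C \setminus S$. The displayed identity collapses to $\sum_{i \in S} m_{ij}\,x_i = 0$, in which every summand is nonpositive (since $m_{ij} \leq 0$ for $i \neq j$) while each $x_i > 0$; hence $m_{ij} = 0$, equivalently $\kappa_{ij} = 0$, for all $i \in S$. That is, no edge of $\Gc(A)$ crosses from $S$ into $C \setminus S$. But $C$ is strongly connected and closed, so any directed path in $\Gc(A)$ from a vertex of $S$ to the fixed $j \in C \setminus S$ remains inside $C$ and must somewhere traverse an edge out of $S$, which is a contradiction. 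Therefore $S = C$ and $[x]_C > 0$.

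The main obstacle, as I see it, is the bookkeeping in the second step: one has to verify that contributions from other closed SCCs really do vanish, and this relies on the closedness of those SCCs rather than on any property of $x$. Once that reduction is made, the cut argument is essentially the discrete analogue of irreducibility forcing the Perron eigenvector to be strictly positive.
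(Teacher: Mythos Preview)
Your proof is correct, but it follows a different route from the paper's. The paper argues directly that positivity propagates along edges: given $x_r>0$ and $\kappa_{rj}=1$, it assumes $x_j=0$, lets $I=\{i:x_i>0\}$, picks $k\in I$ minimizing $d(k,j)$, and uses the cyclic monotonicity~\eqref{cnd: non-incr} to show $(A\T x)_j < (A\T x)_k$, contradicting $A\T x=\lambda e$. No auxiliary matrix is introduced, and the reduction to a single closed SCC is implicit.

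Your argument instead reuses the difference matrix~$M$ from the proof of Lemma~\ref{lem: support}, passes from $A\T x=\lambda e$ to $M\T x=0$, and isolates the block $[M]\T_{CC}[x]_C=0$ by killing the other contributions (open SCCs via Lemma~\ref{lem: support}, other closed SCCs via their closedness). The final cut argument is then the standard ``irreducible $Z$-matrix has strictly positive null vector'' mechanism. Two small remarks: in your last paragraph, closedness of~$C$ is not actually needed for the path to stay in~$C$ --- any path between two vertices of an SCC automatically lies in that SCC --- so the contradiction comes purely from strong connectivity; closedness is only used earlier to zero out the cross terms $m_{ij}$ with $i\in C'\neq C$. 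Also, the singleton case $|C|=1$ is trivially covered since $[x]_C\neq 0$ and $x\geq 0$ force $[x]_C>0$ immediately. Your approach has the virtue of recycling machinery already built and making the Perron--Frobenius flavour explicit; the paper's approach is shorter and more self-contained, exploiting the specific cyclic ordering rather than passing through~$M$.
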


\begin{proof}
	Note that there must be an index~$r$ such that $x_r>0$, and by 
	Lemma~\ref{lem: support}, $r$ lies in a closed SCC. We will prove below 
	that if $x_r>0$ and $\kappa_{rj}=1$, then $x_j>0$. This suffices to prove 
	the lemma because it implies $x_j>0$ for every index~$j$ to which there is 
	a path from~$r$ in the graph~$\Gc(A)$.

	So assume $x_r>0$ and $\kappa_{rj}=1$. Suppose, towards a contradiction, 
	that $x_j=0$. Now let $I$ be the set of indices~$i$ for which $x_i>0$, and 
	let $k$ be the element of~$I$ for which $d(k,j)$ is minimal. Note that 
	by~\eqref{cnd: non-incr} we then have $a_{ij} \leq a_{ik}$ for every 
	element~$i$ of~$I$ because $d(i,k) < d(i,j)$. Moreover, since there is a 
	gap just before element~$a_{rj}$ in row~$r$, we have $a_{rj} < a_{rk}$. 
	Therefore,
	\[
		(A\T x)_j
		= \sum_{i=1}^n a_{ij}\,x_i
		= \sum_{i\in I} a_{ij}\,x_i
		< \sum_{i\in I} a_{ik}\,x_i
		= (A\T x)_k.
	\]
	But this contradicts $A\T x = \lambda\,e$, so it must be the case that 
	$x_j>0$.
\end{proof}

We now focus on solutions to the system $A\T x = \lambda\,e$ that are 
supported on a specific closed SCC of the graph~$\Gc(A)$. Our next two lemmas 
complement each other. The first says that if the vector~$x$ solves $A\T x = 
\lambda\,e$ and $C$ is a closed SCC of~$\Gc(A)$, then the subvector $[x]_C$ is 
mapped to a constant vector by the submatrix~$[A]\T_{CC}$. Conversely, the 
second lemma says that if $x$ is supported on~$C$ and the subvector~$[x]_C$ is 
mapped to the constant vector~$[\lambda\,e]_C$ by~$[A]\T_{CC}$, then $x$ 
solves the system~$A\T x = \lambda\,e$.

\begin{lemma}
	\label{lem: sub solution}
	Let $\lambda\in\R$. Suppose the real $n\times n$ matrix~$A$ 
	satisfies~\eqref{cnd: non-incr}, $C$ is a closed SCC of~$\Gc(A)$, and $x$ 
	is a vector that solves the system $A\T x = \lambda\,e$. Then $[A]\T_{CC} 
	\, [x]_C = [\,\mu\,e]_C$ for some~$\mu\in\R$.
\end{lemma}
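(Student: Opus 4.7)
My plan is to re-use the matrix $M = (m_{ij})$ with $m_{ij} := a_{ij} - a_{i,[j-1]}$ introduced in the proof of Lemma~\ref{lem: support}. Since $(M\T x)_j = (A\T x)_j - (A\T x)_{[j-1]} = \lambda - \lambda = 0$ for every~$j$, one has $M\T x = 0$. The strategy is to show that the function $f(j) := \sum_{i \in C} a_{ij}\,x_i$ is constant on $\{1, \dots, n\}$; restricting that constant value~$\mu$ to $j \in C$ directly yields $[A]\T_{CC}\,[x]_C = [\mu\,e]_C$. Since $f(j) - f([j-1]) = \sum_{i \in C} m_{ij}\,x_i$, it suffices to show that this sum vanishes for every~$j$, after which walking around the cyclic order forces $f$ to be constant.

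I would split the vanishing argument into two cases. If $j \notin C$, then for each $i \in C$ we have $i \neq j$, and $C$ being closed means there is no edge in~$\Gc(A)$ from $i$ to~$j$, so $\kappa_{ij} = 0$; combined with the fact that~\eqref{cnd: non-incr} forces $m_{ij} \leq 0$ for $i \neq j$ with equality iff $\kappa_{ij} = 0$, this gives $m_{ij} = 0$ and the sum vanishes term by term. If $j \in C$, then I would use $\sum_i m_{ij}\,x_i = (M\T x)_j = 0$ and cancel out the contributions from $i \notin C$: if $i$ lies in an open SCC, then $x_i = 0$ by Lemma~\ref{lem: support}; if $i$ lies in some other closed SCC~$C''$, then $i \neq j$ and $\kappa_{ij} = 0$ because $C''$ is closed and $j \notin C''$, giving $m_{ij} = 0$. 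Either way $m_{ij}\,x_i = 0$, hence $\sum_{i \in C} m_{ij}\,x_i = 0$ in this case as well.

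I do not anticipate a serious obstacle; the proof is essentially a bookkeeping argument driven by the closedness property that edges starting in a closed SCC cannot leave it, which makes the relevant off-diagonal blocks of $M$ vanish in both of the directions needed. The only mild care required is to invoke Lemma~\ref{lem: support} correctly when open SCCs are present, and to observe that when the open part $D$ is empty the second case still goes through using only the "other closed SCC" sub-case (and if $C$ is the only SCC, then $\mu = \lambda$ and the statement is immediate from $A\T x = \lambda\,e$).
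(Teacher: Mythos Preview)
Your proof is correct and follows essentially the same approach as the paper's own proof: both rely on the closedness of SCCs to force certain differences $a_{ij}-a_{i,[j-1]}$ to vanish, and on Lemma~\ref{lem: support} to kill the contributions from open SCCs. The only difference is organizational---you step through every index~$j$ and show $f(j)=f([j-1])$, whereas the paper jumps directly between consecutive elements $i_\ell,i_m$ of~$C$ via a telescoping argument; as a minor bonus your version shows $f$ is constant on all of $\{1,\dots,n\}$, which anticipates part of Lemma~\ref{lem: class solution}.
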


\begin{proof}
	If $C$ has only one element there is nothing to prove, so assume $C$ has 
	at least two elements. Let $\{i_1,\dots,i_k\}$ be the set of all indices 
	that are element of a closed SCC of~$\Gc(A)$, where $i_1 <i_2 <\dots 
	<i_k$. Now choose two indices $i_\ell$ and~$i_m$ from this set that both 
	lie in~$C$, such that $i_\ell<i_m$ and no index between $i_\ell$ and~$i_m$ 
	lies in~$C$. Then if $j$ is an index in~$C$ we have $a_{ji_\ell} = 
	a_{ji_{m-1}}$ because the gaps on row~$j$ of the matrix~$A$ occur only 
	right before elements with a column index in~$C$. Similarly, if $j$ is an 
	index in a closed SCC other than~$C$, then $a_{ji_{m-1}} = a_{ji_m}$ 
	because the gaps on row~$j$ occur only right before elements~$a_{ji_k}$ 
	with $i_k$ in the same SCC as~$j$. Finally, by Lemma~\ref{lem: support}, 
	if $j$ is an index in an open SCC, then $x_j=0$. It follows that
	\[\begin{split}
		\sum_{j\in C} \bigl( a_{ji_\ell} - a_{ji_m} \bigr) \, x_j
		&= \sum_{j\in C} \bigl( a_{ji_{m-1}} - a_{ji_m} \bigr) \, x_j \\
		&= \sum_{j=1}^n \bigl( a_{ji_{m-1}} - a_{ji_m} \bigr) \, x_j
		 = (A\T x)_{i_{m-1}} - (A\T x)_{i_m}
		 = 0,
	\end{split}\]
	so the components of the vector $[A]\T_{CC} \, [x]_C$ all have the same 
	value.
\end{proof}

\begin{lemma}
	\label{lem: class solution}
	Let $\lambda\in\R$. Suppose the real $n\times n$ matrix~$A$ 
	satisfies~\eqref{cnd: non-incr}, $C$ is a closed SCC of~$\Gc(A)$, and $x$ 
	is a vector that solves the system
	\[
		[A]\T_{CC} \, [x]_C = [\lambda\,e]_C,\qquad [x]_{C^c} = 0,
	\]
	where $C^c := \{1,\dots,n\} \setminus C$. Then $A\T x = \lambda\,e$.
\end{lemma}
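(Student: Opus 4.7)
The plan is to verify $(A\T x)_j = \lambda$ for every $j\in\{1,\dots,n\}$, splitting into the easy case $j\in C$ and the case $j\in C^c$ which is driven by the closedness of~$C$.

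For $j\in C$, I would just unpack the definitions: since $[x]_{C^c}=0$, we have $(A\T x)_j = \sum_{i=1}^n a_{ij}\,x_i = \sum_{i\in C} a_{ij}\,x_i = \bigl([A]\T_{CC}[x]_C\bigr)_j = \lambda$, using the hypothesis $[A]\T_{CC}[x]_C = [\lambda\,e]_C$.

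For $j\in C^c$ the idea is to reduce to the previous case by walking cyclicly. The key observation is that because $C$ is a closed SCC, the graph $\Gc(A)$ has no edge from any $i\in C$ to any $j\in C^c$, i.e., $\kappa_{ij}=0$ whenever $i\in C$ and $j\in C^c$. By the definition of~$K$, this means there is no gap in row~$i$ right before column~$j$, so $a_{ij} = a_{i,[j-1]}$ for every $i\in C$ and every $j\in C^c$. Combining this with $[x]_{C^c}=0$ gives, for $j\in C^c$,
\[
	(A\T x)_j = \sum_{i\in C} a_{ij}\,x_i
	= \sum_{i\in C} a_{i,[j-1]}\,x_i
	= \sum_{i=1}^n a_{i,[j-1]}\,x_i
	= (A\T x)_{[j-1]}.
\]
Starting from any $j\in C^c$ and iterating $j\mapsto[j-1]$, the cyclic sequence $j,[j-1],[j-2],\dots$ must eventually enter~$C$ (because $C$ is non-empty as a closed SCC), and at each step either the index is still in $C^c$, allowing another iteration, or it is in~$C$, where the first case applies and yields the value~$\lambda$. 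Hence $(A\T x)_j=\lambda$ for every $j\in C^c$ as well.

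There is no real obstacle here beyond correctly identifying the content of ``closedness of~$C$'' as the statement $\kappa_{ij}=0$ for $i\in C,\ j\in C^c$, and translating this back into the equality $a_{ij}=a_{i,[j-1]}$ on rows indexed by~$C$; once that equality is in hand, the cyclic walk argument is automatic. The proof does not require anything from Lemma~\ref{lem: support} or Lemma~\ref{lem: class support}, and is essentially the converse bookkeeping to the computation performed in Lemma~\ref{lem: sub solution}.
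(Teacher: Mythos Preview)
Your proof is correct and follows essentially the same approach as the paper's. The only cosmetic difference is that the paper, instead of stepping backward one index at a time via $(A\T x)_j=(A\T x)_{[j-1]}$, jumps directly to the element $k\in C$ with $d(k,j)$ minimal and observes $a_{ij}=a_{ik}$ for all $i\in C$ in one stroke; the underlying idea---closedness of $C$ forces rows indexed by $C$ to have no gaps at columns in $C^c$---is identical.
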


\begin{proof}
	To see that $A\T x = \lambda\,e$, it only remains to show that $(A\T x)_i 
	= \lambda$ for $i\notin C$. So suppose $i\notin C$. Let $k$ be the element 
	of~$C$ for which $d(k,i)$ is minimal. Then we must have $a_{ji} = a_{jk}$ 
	for every~$j$ in~$C$, because row~$j$ of the matrix~$A$ cannot have any 
	gaps between columns $k$ and~$i$. Therefore,
	\[
		(A\T x)_i
		= \sum_{j=1}^n a_{ji} \, x_j
		= \sum_{j\in C} a_{ji} \, x_j
		= \sum_{j\in C} a_{jk} \, x_j
		= (A\T x)_k
		= \lambda. \qedhere
	\]
\end{proof}

We introduce some new terminology to proceed. Suppose the matrix~$A$ 
satisfies~\eqref{cnd: non-incr} and $C$ is a closed SCC of the graph~$\Gc(A)$. 
Consider the system
\begin{equation}
	\label{eqn: class system}
	[A]\T_{CC} \, [x]_C = [e]_C,
	\qquad [x]_{C^c} = 0,
\end{equation}
where $C^c := \{1,\dots,n\} \setminus C$. We call~$C$ a \emph{fundamental} 
closed SCC if this system has a solution~$x$, in which case $x$ solves $A\T 
x=e$ by Lemma~\ref{lem: class solution} and is called a \emph{fundamental 
solution for~$C$} to~$A\T x=e$. If, on the other hand, the system~\eqref{eqn: 
class system} has no solutions, then we say $C$ is \emph{null}. In that case, 
$\ker {[A]\T_{CC}}$ is non-trivial and we call any vector~$y$ such that $[y]_C 
\in \ker {[A]\T_{CC}}$ and $[y]_{C^c} = 0$ a \emph{null vector for~$C$}. Note 
that by Lemma~\ref{lem: class solution}, such a null vector lies in~$\ker 
A\T$.

We will show later that every fundamental solution is unique and either 
non-positive or non-negative. But for the moment, we only assume that for each 
fundamental closed SCC there is a fundamental solution that is non-positive or 
non-negative. Our next theorem states that then the solutions to the 
system~$A\T x = \lambda\,e$ are precisely the sums of a linear combination of 
these fundamental solutions with coefficients summing to~$\lambda$, and null 
vectors for the closed SCCs that are null. In case the closed SCCs are all 
fundamental or all null, the theorem is to be interpreted as stating that the 
solutions to the system~$A\T x = \lambda\,e$ are, respectively, linear 
combinations of only fundamental solutions or sums of only null vectors. In 
particular, the system has no solutions for $\lambda\neq0$ if every closed SCC 
is null.

\begin{theorem}
	\label{thm: solutions}
	Suppose the real $n\times n$ matrix~$A$ satisfies~\eqref{cnd: non-incr}, 
	$\Gc(A)$ has exactly~$k$ fundamental closed SCCs $C_1,\dots,C_k$ and 
	exactly~$\ell$ closed SCCs $C_{k+1},\dots,C_{k+\ell}$ that are null, and 
	for $i=1,\dots,k$ there is a non-positive or non-negative fundamental 
	solution~$x_i$ for~$C_i$. Let $\lambda\in \R$. Then the solutions to the 
	system~$A\T x = \lambda \,e$ are precisely the vectors $x = \sum_{i=1}^{k} 
	\alpha_i \,x_i + \sum_{i=1}^{\ell} y_{k+i}$ with $\sum_{i=1}^{k} \alpha_i 
	= \lambda$ and $y_{k+i}$ a null vector for~$C_{k+i}$ for $i = 
	1,\dots,\ell$.
\end{theorem}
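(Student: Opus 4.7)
The plan is to prove the characterization by a double inclusion. The $(\Leftarrow)$ direction is immediate from linearity: by Lemma~\ref{lem: class solution}, each fundamental solution $x_i$ satisfies $A\T x_i = e$ and each null vector $y_{k+i}$ lies in $\ker A\T$, so any vector of the stated form maps under $A\T$ to $(\sum_i \alpha_i)\,e = \lambda\,e$.

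For the $(\Rightarrow)$ direction, I would take an arbitrary solution $x$ of $A\T x = \lambda\,e$ and reconstruct its decomposition closed-SCC by closed-SCC. Lemma~\ref{lem: support} confines $x$ to the union of the closed SCCs, and on each closed SCC $C$, Lemma~\ref{lem: sub solution} produces a scalar $\mu_C$ with $[A]\T_{CC}\,[x]_C = \mu_C\,e$. If $C = C_{k+j}$ is null, then by definition $[e]_C$ is not in the range of $[A]\T_{CC}$, which forces $\mu_C = 0$; consequently $[x]_C \in \ker [A]\T_{CC}$ and its zero-extension serves as the required null vector $y_{k+j}$. If $C = C_i$ is fundamental, set $\alpha_i := \mu_C$, so that $[A]\T_{C_iC_i}([x]_{C_i} - \alpha_i [x_i]_{C_i}) = 0$. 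Once one concludes $[x]_{C_i} = \alpha_i [x_i]_{C_i}$, the decomposition $x = \sum_i \alpha_i x_i + \sum_j y_{k+j}$ is in hand, and applying $A\T$ to this identity gives, via the $(\Leftarrow)$ direction, $\lambda\,e = (\sum_i \alpha_i)\,e$, which yields the required constraint on the $\alpha_i$.

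The main obstacle I foresee is exactly the fundamental case: showing that $\ker [A]\T_{C_iC_i} = 0$ for every fundamental $C_i$, equivalently that the fundamental solution supported on $C_i$ is unique. My intended route combines the assumed sign-definiteness of $x_i$ with Lemma~\ref{lem: class support}: a non-zero $w \in \ker [A]\T_{C_iC_i}$ extends by zeros to $\tilde w \in \ker A\T$ supported on $C_i$, and combining $\tilde w$ with a suitable multiple of $x_i$ should yield a non-negative, non-zero element of $\ker A\T$, which by Lemma~\ref{lem: class support} must then be strictly positive on all of $C_i$. Playing this off against the Fredholm characterization of fundamentality --- that every vector in $\ker[A]_{C_iC_i}$ has entries summing to zero --- together with the cyclic row-decrease structure~\eqref{cnd: non-incr} should force $w = 0$; executing this step carefully is the delicate part.
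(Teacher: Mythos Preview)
Your overall plan is sound and close in spirit to the paper's, but the ``delicate part'' you flag is indeed not yet right. A combination $\beta\,\tilde w + \gamma\,x_i$ lies in $\ker A\T$ only when $\gamma=0$, since $A\T x_i=e$; so you will never manufacture a non-negative, non-zero element of $\ker A\T$ by mixing in $x_i$. The Fredholm remark is also misdirected: fundamentality of $C_i$ says $e\perp\ker[A]_{C_iC_i}$, which constrains the kernel of $[A]_{C_iC_i}$, not the kernel of its transpose where your $w$ lives, so it gives no direct leverage on $w$. Fortunately the repair uses exactly the ingredients you already have. Given $0\neq w\in\ker[A]\T_{C_iC_i}$ with zero-extension $\tilde w$, and using that $[x_i]_{C_i}$ is strictly positive or strictly negative (Lemma~\ref{lem: class support}), replace $\tilde w$ by $-\tilde w$ if necessary and pick $t$ so that $v:=x_i+t\,\tilde w$ satisfies $v\ge 0$, $v\neq 0$, and $v_j=0$ for some $j\in C_i$. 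Then $A\T v=e$, and Lemma~\ref{lem: class support} forces $[v]_{C_i}>0$, contradicting $v_j=0$. No appeal to Fredholm or to the row structure~\eqref{cnd: non-incr} beyond what is already encoded in Lemma~\ref{lem: class support} is needed.

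The paper sidesteps the whole kernel discussion: rather than first setting $\alpha_i:=\mu_{C_i}$ and then arguing that the leftover kernel element vanishes, it \emph{defines} $\alpha_i$ as the extremal $\alpha$ for which $y_i-\alpha\,x_i\ge 0$ (an $\inf$ or a $\sup$ according to the sign of $x_i$). Then $z:=\sum_{i=1}^k(y_i-\alpha_i x_i)$ is non-negative with at least one zero coordinate on every closed SCC, and $A\T z=(\lambda-\sum_i\alpha_i)\,e$; a single invocation of Lemma~\ref{lem: class support} yields $z=0$ and $\sum_i\alpha_i=\lambda$ in one stroke. This is essentially the same mechanism as the corrected argument above, just applied to $y_i$ directly instead of to a hypothetical kernel element. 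Note also that the paper establishes uniqueness of fundamental solutions (Lemma~\ref{lem: single class solution}) only \emph{after} Theorem~\ref{thm: solutions}, and its proof invokes the theorem; so you are right not to simply cite uniqueness here, as that would be circular.
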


\begin{proof}
	That every vector~$x$ of the proposed form solves the system $A\T x = 
	\lambda\,e$ is immediately clear from Lemma~\ref{lem: class solution}. To 
	prove the converse statement, let $x$ be a solution to $A\T x = 
	\lambda\,e$. For $i = 1,\dots,k+\ell$, let $y_i$ be the vector defined by 
	$[y_i]_{C_i} := [x]_{C_i}$ and $[y_i]_{C_i^c} := 0$. Then by 
	Lemma~\ref{lem: support} and the fact that the closed SCCs are disjoint, 
	we have $x = y_1 + \dots + y_{k+\ell}$. Now note that for $i = 
	1,\dots,\ell$, Lemma~\ref{lem: sub solution} implies that $y_{k+i}$ is a 
	null vector because $C_{k+i}$ is null, and then $A\T y_{k+i} = 0$ by 
	Lemma~\ref{lem: class solution}. Finally, for $i = 1,\dots,k$, using that 
	either $[x_i]_{C_i}<0$ or $[x_i]_{C_i}>0$ by Lemma~\ref{lem: class 
	support}, we set
	\[
		\alpha_i := \begin{cases}
			\phantom\sup\llap{$\inf$}
				\{\alpha\in\R \colon y_i - \alpha \, x_i \geq 0\}
			& \text{if $x_i \leq 0$}, \\
			\sup\{\alpha\in\R \colon y_i - \alpha \, x_i \geq 0\}
			& \text{if $x_i \geq 0$},
		\end{cases}
	\]
	and define $z := \sum_{i=1}^{k} (y_i - \alpha_i \, x_i)$. Note that by 
	these definitions, $z\geq0$ and for every closed SCC there is at least one 
	index~$j$ in that SCC such that $z_j=0$. Moreover, we also have
	\[
		A\T z
		= \sum_{i=1}^k A\T (y_i - \alpha_i \, x_i)
		= A\T x - \sum_{i=1}^k \alpha_i \, A\T x_i
		= \Bigl( \lambda - \sum_{i=1}^k \alpha_i \Bigr) \, e.
	\]
	By Lemma~\ref{lem: class support} it then has to be the case that $z=0$ 
	and $\sum_{i=1}^{k} \alpha_i = \lambda$.
\end{proof}

\subsection{Existence and uniqueness}

In the previous section we showed that the solutions to the system $A\T x = 
\lambda\,e$ can be expressed in terms of fundamental solutions to~$A\T x=e$, 
under the assumption that these fundamental solutions are either non-positive 
or non-negative. In this section we prove that every fundamental solution is 
unique and non-positive or non-negative. We also provide conditions that imply 
the existence of a fundamental solution for a closed SCC. We start by noting 
that the following special case of Farkas' Lemma (see, e.g., \cite{Ko1998} for 
a proof) provides a necessary and sufficient condition for the existence of a 
non-negative solution to $A\T x=e$:

\begin{Farkas}
	For a real $n\times n$ matrix~$A$ exactly one the following two assertions 
	is true:
	\begin{enumerate}
		\item[(a)]
			There exists a vector~$x$ such that $A\T x=e$ and $x \geq0$.
		\item[(b)]
			There exists a vector~$z$ such that $Az\geq0$ and $e\T z<0$.
	\end{enumerate}
\end{Farkas}

It is, however, not obvious how to assess which of the two alternatives in 
Farkas' Lemma holds for a generic matrix~$A$ in the class~\eqref{cnd: 
non-incr}. But (based on Farkas' Lemma) our next lemma shows that the system 
$A\T x=e$ has a non-negative solution if $Ae>0$ and a non-positive solution if 
$Ae<0$.

\begin{lemma}
	\label{lem: Motzkin}
	Suppose the real $n\times n$ matrix~$A$ satisfies~\eqref{cnd: non-incr}. 
	If $Ae>0$, then $Az=0$ implies $e\T z=0$ and the system $A\T x=e$ has a 
	solution~$x\geq0$. Likewise, if $Ae<0$, then $Az=0$ implies $e\T z=0$ and 
	the system $A\T x=e$ has a solution~$x\leq0$.
\end{lemma}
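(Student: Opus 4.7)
The plan is to invoke Farkas' Lemma. For $Ae > 0$, the existence of a non-negative solution to $A\T x = e$ is equivalent (by Farkas) to ruling out any $z$ with $Az \geq 0$ and $e\T z < 0$. Suppose for contradiction such a $z$ exists; scaling so that $e\T z = -1$ and setting $y := z + e/n$ yields $e\T y = 0$ and $Ay = Az + (1/n)\, Ae > 0$. The entire argument thus reduces to the key claim: for $A$ satisfying~\eqref{cnd: non-incr}, no vector $y$ with $e\T y = 0$ can satisfy $Ay > 0$.

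To prove this key claim, I would apply Abel (summation by parts) row by row in the cyclic order starting from the diagonal. Fix~$i$ and set $b_s := a_{i,[i+s]}$, which is weakly decreasing in $s = 0, \ldots, n-1$ by~\eqref{cnd: non-incr}. Let $S^{(i)}_k := y_{[i]} + y_{[i+1]} + \cdots + y_{[i+k-1]}$ denote the cyclic partial sum of $y$ from~$i$, noting that $S^{(i)}_n = e\T y = 0$. Abel summation then yields
\[
	(Ay)_i = \sum_{k=1}^{n-1} \bigl(b_{k-1} - b_k\bigr)\, S^{(i)}_k,
\]
exhibiting $(Ay)_i$ as a non-negative combination of the partial sums $S^{(i)}_1, \ldots, S^{(i)}_{n-1}$. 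Writing $Q_j := \sum_{t=1}^j y_t$ for the running sums (so $Q_0 = Q_n = 0$), one checks that $S^{(i)}_k = Q_{(i+k-1) \bmod n} - Q_{i-1}$, and as $k$ ranges over $\{1, \ldots, n-1\}$ the first index sweeps out $\{0, \ldots, n-1\} \setminus \{i-1\}$. Choosing $i^*$ with $Q_{i^*-1} = \max_j Q_j$ therefore forces every $S^{(i^*)}_k$ to be non-positive, giving $(Ay)_{i^*} \leq 0$, which contradicts $Ay > 0$.

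The kernel statement is then immediate: if $Az = 0$ then also $A(\pm z) \geq 0$, and applying the main Farkas conclusion to both $z$ and $-z$ forces $e\T z = 0$. The case $Ae < 0$ is symmetric. Here we want $A\T x = e$ to have a solution with $x \leq 0$; via Farkas applied to the equivalent system $A\T(-x) = -e$ with $-x \geq 0$, it suffices to exclude $z$ with $Az \geq 0$ and $e\T z > 0$. Rescaling to $e\T z = 1$ and setting $y := z - e/n$ again gives $e\T y = 0$ and $Ay > 0$, which is impossible by the same key claim.

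The principal obstacle is the key claim. Once the Abel summation identity is set up, the crux is the pigeonhole observation that the cyclic running sums $Q$ must attain their maximum at some index, and the corresponding row $i^*$ then has all its cyclic partial sums non-positive, forcing $(Ay)_{i^*} \leq 0$.
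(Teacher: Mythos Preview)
Your proposal is correct and follows essentially the same approach as the paper's own proof: both invoke Farkas' Lemma, reduce to a zero-sum vector by subtracting a multiple of~$e$, apply Abel summation row by row in the cyclic order starting from the diagonal, and select the row index at which the cyclic running sums attain their maximum to force that row of $Ay$ (or $Az$) to be non-positive. The only difference is cosmetic: you isolate the ``key claim'' (no zero-sum $y$ has $Ay>0$) as a separate step, whereas the paper keeps the $\bar z$-term inside the computation and shows $(Az)_{[r+1]}<0$ directly; your row index $i^*$ with $Q_{i^*-1}$ maximal is exactly the paper's $[r+1]$ with $r$ maximizing $\sum_{i\le r} y_i$.
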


\begin{proof}
	This proof uses ideas from Motzkin's original paper~\cite{MR0223384}* 
	{proof of Theorem~8}. We first consider the case that $Ae>0$. Let $z$ be a 
	vector such that $e\T z = z_1 + \dots + z_n < 0$. We will show this 
	implies that at least one component of~$Az$ is strictly negative. To this 
	end, define $\zbar := \tfrac1n \sum_{i=1}^n z_i$ and let~$y$ be the vector 
	with components $y_i := z_i - \zbar$. Note that then $\zbar<0$ and 
	$y_1+\dots+y_n = 0$. Choose the row index~$r$ so that it maximizes the 
	sums $\sum_{i=1}^r y_i$. Since $\sum_{i=1}^n y_i = 0$ we then have
	\begin{equation}
		\label{eqn: Motzkin rotation}
		\sum_{i=1}^m y_{[r+i]}
		= \sum_{i=1}^{r+m} y_{[i]} - \sum_{i=1}^r y_i
		\leq 0
		\qquad \text{for $m = 1,\dots,n$}.
	\end{equation}
	Next, note that component~$[r+1]$ of the vector~$Az$ can be expressed as
	\begin{multline}
		\label{eqn: Motzkin}
		\sum_{i=1}^n a_{[r+1][r+i]} \, \bigl( y_{[r+i]} + \zbar \bigr)
		= \sum_{m=1}^{n-1} \bigl( a_{[r+1][r+m]} - a_{[r+1][r+m+1]} \bigr) 
		\sum_{i=1}^m y_{[r+i]} \\
		+ a_{[r+1][r+n]} \sum_{i=1}^n y_{[r+i]}
		+ \sum_{i=1}^n a_{[r+1][r+i]} \, \zbar.
	\end{multline}
	By \eqref{cnd: non-incr} and~\eqref{eqn: Motzkin rotation}, the first term 
	on the right is non-positive. The second term is zero because 
	$\sum_{i=1}^n y_{[r+i]} = 0$. Finally, since $Ae>0$ and~$\zbar<0$, the 
	third term is strictly negative. We conclude that $\smash{ (Az)_{[r+1]} } 
	<0$. This rules out option~(b) in Farkas' Lemma, leaving option~(a). 
	Moreover, $Az=0$ implies both $Az\geq0$ and $A(-z)\geq0$, and hence $e\T 
	z=0$ by the argument above.

	The proof in the case $Ae<0$ is similar. Again, let $z$ be a vector such 
	that $e\T z<0$, and define $\zbar$ and~$y$ as before. This time, let~$r$ 
	be the row index that \emph{minimizes} the sums $\sum_{i=1}^r y_i$, so 
	that
	\[
		\sum_{i=1}^m y_{[r+i]}
		= \sum_{i=1}^{r+m} y_{[i]} - \sum_{i=1}^r y_i
		\geq 0
		\qquad \text{for $m = 1,\dots,n$}.
	\]
	From equation~\eqref{eqn: Motzkin} we conclude that $(Az)_{[r+1]} >0$. 
	Farkas' Lemma applied to the matrix~$-A$ then says that there exists a 
	non-negative vector~$x$ such that $A\T(-x)=e$. Also, $Az=0$ again implies 
	$e\T z=0$, as before.
\end{proof}

Lemma~\ref{lem: Motzkin} does not specifically target \emph{fundamental} 
solutions to~$A\T x = e$, which are supported on a particular closed SCC of 
the graph~$\Gc(A)$. But if the matrix~$A$ satisfies~\eqref{cnd: non-incr} and 
$C$ is a closed SCC of~$\Gc(A)$, then the submatrix~$[A]_{CC}$ also 
satisfies~\eqref{cnd: non-incr}. Therefore, Lemma~\ref{lem: Motzkin} combined 
with Lemma~\ref{lem: class solution} applied to the submatrix~$[A]_{CC}$ 
provides simple conditions for the existence of a non-positive or non-negative 
fundamental solution for~$C$.

Now observe that because $C$ is a closed SCC, any gaps on a row of the 
matrix~$A$ that has a row index in~$C$ occur right before elements with a 
column index in~$C$. It follows that the adjacency matrix associated with the 
submatrix~$[A]_{CC}$ is simply~$[K]_{CC}$. As a consequence, all vertices of 
the graph~$\Gc( [A]_{CC} )$ are mutually strongly connected. Our next lemma 
applied to the submatrix~$[A]_{CC}$ therefore shows that if $C$ is 
fundamental, then the fundamental solution for~$C$ is unique and 
non-positive or non-negative.

\begin{lemma}
	\label{lem: single class solution}
	Suppose the real $n\times n$ matrix~$A$ satisfies~\eqref{cnd: non-incr} 
	and $\{1,\dots,n\}$ is the only SCC of~$\Gc(A)$. Then the system $A\T x = 
	e$ has at most one solution, and if $x$ is a solution, then either $x<0$ 
	or $x>0$.
\end{lemma}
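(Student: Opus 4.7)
The plan is to combine a kernel-structure argument for the ``Laplacian-like'' matrix $M := (m_{ij})$ with $m_{ij} := a_{ij} - a_{i,[j-1]}$ (the same matrix used in the proof of Lemma~\ref{lem: support}) with a perturbation based on Lemma~\ref{lem: Motzkin}. By~\eqref{cnd: non-incr}, $M$ is a $Z$-matrix with non-negative diagonal, $Me = 0$, and for $i \neq j$ we have $m_{ij} < 0$ precisely when $\kappa_{ij} = 1$. The key observation is that $(M\T z)_j = (A\T z)_j - (A\T z)_{[j-1]}$, so $M\T z = 0$ if and only if $A\T z = \mu\,e$ for some $\mu \in \R$.

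First I would show that $\ker M = \R\,e$ under the strong connectivity hypothesis. Given $y \in \ker M$, the shifted vector $y' := y - (\min_i y_i)\,e$ is non-negative, has at least one zero coordinate, and still lies in $\ker M$. For any index $i$ with $y'_i = 0$, expanding $(My')_i = 0$ gives a sum of non-positive terms $m_{ij}\,y'_j$ ($j \neq i$) that must each vanish individually; so $\kappa_{ij} = 1$ forces $y'_j = 0$. The zero set of $y'$ is therefore non-empty and closed under following out-edges of $\Gc(A)$, which by strong connectivity makes it all of $\{1, \ldots, n\}$. Hence $y' = 0$ and $y \in \R\,e$, and rank-nullity gives $\dim \ker M\T = 1$.

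Next I would exhibit a strictly positive generator of $\ker M\T$ by perturbation. Set $A_\varepsilon := A + \varepsilon\,E$ for $\varepsilon > 0$ large enough that $A_\varepsilon\,e > 0$. Then $A_\varepsilon$ still satisfies~\eqref{cnd: non-incr} with $\Gc(A_\varepsilon) = \Gc(A)$ (the gaps sit at the same positions), so Lemma~\ref{lem: Motzkin} supplies $y \geq 0$ with $A_\varepsilon\T y = e$, and Lemma~\ref{lem: class support} upgrades this to $y > 0$ (the only closed SCC is $\{1, \ldots, n\}$). A direct computation gives $A\T y = A_\varepsilon\T y - \varepsilon\,(e\T y)\,e = \mu\,e$ with $\mu := 1 - \varepsilon\,e\T y$, showing $y \in \ker M\T$ and hence $\ker M\T = \R\,y$.

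To conclude, any solution $x$ of $A\T x = e$ must lie in $\ker M\T = \R\,y$, so $x = c\,y$ for some $c \in \R$; then $A\T x = c\mu\,e = e$ forces $c\mu = 1$. If $\mu \neq 0$ then $x = y/\mu$ is the unique solution and has strict constant sign, while if $\mu = 0$ the system has no solution and the lemma is vacuous. The main obstacle is the first step of pinning down $\dim \ker M = 1$, but the out-edge-closure argument used there is a light variation on the technique already employed in the proof of Lemma~\ref{lem: support} to handle $[M]_{DD}$.
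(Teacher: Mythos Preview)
Your proof is correct, and it takes a genuinely different route from the paper's.

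Both arguments share the perturbation step: pass to $A_\varepsilon = A + \varepsilon E$ with $A_\varepsilon e > 0$, invoke Lemma~\ref{lem: Motzkin} to get a non-negative solution $y$ of $A_\varepsilon\T y = e$, and upgrade to $y > 0$ via Lemma~\ref{lem: class support}. The divergence is in how one-dimensionality is obtained. The paper applies Theorem~\ref{thm: solutions} to the perturbed matrix (whose single closed SCC is fundamental with the given $y$) to conclude that $y$ is the \emph{unique} solution of $(A+\varepsilon E)\T y = e$; it then observes that each $x_i/(1+\varepsilon\,e\T x_i)$ is also such a solution, forcing $x_1 = x_2$ to be a non-zero multiple of~$y$. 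You instead work with the Laplacian-type matrix~$M$, show $\ker M = \R\,e$ by the standard ``zero set is closed under out-edges'' argument, deduce $\dim\ker M\T = 1$ by rank-nullity, and identify $y$ as a strictly positive generator of $\ker M\T$; since any solution of $A\T x = e$ lies in $\ker M\T$, it is a scalar multiple of~$y$.

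Your approach is more self-contained: it avoids the forward reference to Theorem~\ref{thm: solutions} (whose hypotheses are only fully justified \emph{after} the present lemma) and the small technicality of choosing $\varepsilon$ so that $1+\varepsilon\,e\T x_i \neq 0$. The paper's approach, on the other hand, illustrates how Theorem~\ref{thm: solutions} already encodes the uniqueness statement once a sign-definite fundamental solution is in hand, so no separate kernel computation is needed.
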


\begin{proof}
	Suppose $x_1$ and~$x_2$ are solutions to the system~$A\T x = e$. Recall 
	that~$E$ is the $n\times n$ matrix of all ones, and consider the matrices 
	$A + \lambda E$ for $\lambda>0$. These matrices clearly 
	satisfy~\eqref{cnd: non-incr}, and because they have gaps in the same 
	positions on all rows, $\{1,\dots,n\}$ is their only SCC. Moreover, 
	$A+\lambda E>0$ for $\lambda$ large enough. So we can choose~$\lambda$ 
	such that $A + \lambda E > 0$ and, in addition, $\lambda \,e\T x_1 \neq 
	-1$ and $\lambda \,e\T x_2\neq -1$. Then by Lemma~\ref{lem: Motzkin} and 
	Theorem~\ref{thm: solutions}, there is a unique non-negative vector~$y$ 
	that satisfies $(A + \lambda E)\T y = e$. By Lemma~\ref{lem: class 
	support}, $y>0$. But for $i=1,2$ we also have
	\[
		(1+\lambda\, e\T x_i)^{-1} (A+\lambda E)\T x_i
		= (1+\lambda\, e\T x_i)^{-1} (1+\lambda \,e\T x_i) \,e = e.
	\]
	It follows that $x_1 = \mu_1 \,y$ and $x_2 = \mu_2 \,y$, where $\mu_i = 
	(1+\lambda \,e\T x_i)$. But then $A\T x_1 = A\T x_2 = e$ implies $\mu_1 = 
	\mu_2 \neq 0$. So $x_1=x_2$, and since $y>0$, it is the case that either 
	$x_1 < 0$ or $x_1 > 0$.
\end{proof}

\subsection{Main result}

We are now in a position to prove our main result. We separate this into two 
theorems, the first one covering possibilities 1 and~2(a) from 
Section~\ref{sec: main result}, and the second covering possibilities 2(b) 
and~2(c).

\begin{theorem}
	\label{thm: singular}
	The real $n\times n$ matrix~$A$ is singular if it satisfies~\eqref{cnd: 
	non-incr} and~$\Gc(A)$ has at least two closed SCCs or exactly one closed 
	SCC which is null.
\end{theorem}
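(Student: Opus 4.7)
The plan is to produce, in every case covered by the hypothesis, an explicit nonzero vector in $\ker A\T$, which immediately yields $\det A = 0$. The construction splits according to whether a null closed SCC is available: if one is, a null vector for it will do; if not, the hypothesis forces at least two fundamental closed SCCs, and I will use the difference of their fundamental solutions.

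\textbf{Case 1 (a null closed SCC exists).} Suppose some closed SCC $C$ of $\Gc(A)$ is null. Then by definition the system $[A]\T_{CC}[x]_C = [e]_C$ has no solution, hence $[A]\T_{CC}$ is singular, so $\ker[A]\T_{CC}$ contains a nonzero vector $v$. Form $y \in \R^n$ by setting $[y]_C := v$ and $[y]_{C^c} := 0$; this is a null vector for $C$ in the terminology of the paper. Applying Lemma~\ref{lem: class solution} with $\lambda = 0$ to $y$ yields $A\T y = 0$. Since $y \neq 0$, this shows $\ker A\T$ is nontrivial and $A$ is singular. This case covers, in particular, the sub-hypothesis ``exactly one closed SCC which is null'' and any other configuration with a null closed SCC.

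\textbf{Case 2 (no null closed SCC, but at least two closed SCCs).} Then the hypothesis forces at least two closed SCCs $C_1, C_2$, both of which are fundamental. Let $x_1$ be a fundamental solution for $C_1$ and $x_2$ a fundamental solution for $C_2$; by construction both satisfy $A\T x_i = e$, and both are supported on their respective SCCs (with zero entries outside). Applying Lemma~\ref{lem: single class solution} to the submatrices $[A]_{C_iC_i}$ (which, as noted just before that lemma, inherit \eqref{cnd: non-incr} and have the single SCC $C_i$), we learn that $[x_i]_{C_i}$ is either strictly positive or strictly negative, so in particular nonzero. Since $C_1 \cap C_2 = \varnothing$, the supports of $x_1$ and $x_2$ are disjoint, so $x_1 - x_2 \neq 0$. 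Finally,
\[
    A\T (x_1 - x_2) = e - e = 0,
\]
so $x_1 - x_2$ is a nonzero element of $\ker A\T$, and again $\det A = 0$.

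There is no real obstacle beyond packaging the case split cleanly: the heavy lifting has been done in Lemmas~\ref{lem: class solution} and~\ref{lem: single class solution}, and the only thing to check is that the hypotheses of the theorem always place us in one of the two cases above, which is immediate (at least two closed SCCs means either a null one exists, dropping us into Case~1, or all are fundamental, putting us into Case~2; the other branch of the hypothesis is already Case~1).
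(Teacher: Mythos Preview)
Your proof is correct. Both you and the paper exploit the same dichotomy (either some closed SCC is null, or at least two are fundamental) and ultimately rely on Lemmas~\ref{lem: class solution} and~\ref{lem: single class solution}. The execution differs slightly: the paper invokes Theorem~\ref{thm: solutions} to conclude that $A\T x=0$ has infinitely many solutions, whereas you bypass that structure theorem entirely and exhibit an explicit nonzero kernel vector in each case---a null vector in Case~1 and $x_1-x_2$ in Case~2. Your route is therefore a bit more self-contained, since Theorem~\ref{thm: solutions} itself depends on Lemmas~\ref{lem: support}--\ref{lem: class solution} and Lemma~\ref{lem: class support}; on the other hand, the paper's formulation makes clear that the kernel is not just nontrivial but has dimension at least the number of ``extra'' closed SCCs. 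As a minor remark, in Case~2 you do not actually need Lemma~\ref{lem: single class solution} to see that $x_i\neq 0$: this is immediate from $A\T x_i = e \neq 0$, and then $x_1-x_2\neq 0$ follows because $[x_1]_{C_1}\neq 0$ while $[x_2]_{C_1}=0$.
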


\begin{proof}
	Note that under the assumptions of the theorem, the graph~$\Gc(A)$ has at 
	least one closed SCC which is null, or at least two closed SCCs that are 
	fundamental. Now if $C$ is any closed SCC of~$\Gc(A)$, then by 
	Lemma~\ref{lem: single class solution} applied to the submatrix~$[A]_{CC}$ 
	there either is a unique fundamental solution for~$C$ which is 
	non-positive or non-negative, or $C$ is null. If $C$ is null, then $\ker 
	{[A]_{CC}}$ is non-trivial and there are infinitely many null vectors 
	for~$C$. It therefore follows from Theorem~\ref{thm: solutions} that the 
	system~$A\T x=0$ has infinitely many solutions, hence $\det A\T=\det A=0$.
\end{proof}

\begin{theorem}
	\label{thm: invertible}
	Suppose the real $n\times n$ matrix~$A$ satisfies~\eqref{cnd: non-incr}, 
	the graph~$\Gc(A)$ has exactly one closed SCC, and the 
	system~$A\T x = e$ has a solution~$x$. Then either $x\geq0$ and $\det 
	A>0$, or $x\leq0$ and $\det A<0$.
\end{theorem}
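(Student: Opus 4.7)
The plan is to combine the structural results already in hand with a one-parameter rank-one deformation that reduces the question to a matrix where Motzkin's theorem applies directly. Let $C$ denote the unique closed SCC of $\Gc(A)$. I would begin by noting that the hypothesis that $A\T x=e$ has a solution forces $C$ to be fundamental: otherwise Theorem~\ref{thm: solutions} with $k=0$, $\ell=1$ and $\lambda=1$ would demand the inconsistent condition $\sum\alpha_i=1$ over an empty collection of coefficients. Applying Lemma~\ref{lem: single class solution} to the submatrix $[A]_{CC}$ (whose associated graph has $C$ as its only SCC) then makes the fundamental solution for $C$ unique with $[x]_C>0$ or $[x]_C<0$; combined with $[x]_{C^c}=0$ this gives $x\geq0$ or $x\leq0$. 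Taking $\lambda=0$ in Theorem~\ref{thm: solutions} also forces $\ker A\T=\{0\}$, hence $\det A\neq0$. Writing $\sigma:=e\T x$, the sign of $\sigma$ agrees with that of the nonzero entries of $x$, so $\sigma>0$ in the first case and $\sigma<0$ in the second.

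Next I would introduce the deformation $A(\lambda):=A+\lambda E$ for $\lambda\geq0$. Since adding $\lambda E$ shifts every entry of every row by the same constant, $A(\lambda)$ still satisfies \eqref{cnd: non-incr}. Because $A$ is invertible and $E=ee\T$ has rank one, the matrix determinant lemma yields
\[
	\det A(\lambda)
	= \det A\cdot\bigl(1+\lambda\,e\T A^{-1}e\bigr)
	= \det A\cdot(1+\lambda\sigma),
\]
where the second equality uses that $A\T x=e$ encodes $x\T=e\T A^{-1}$. For $\lambda$ sufficiently large, $A(\lambda)$ is non-negative and still satisfies \eqref{cnd: non-incr}, so Motzkin's theorem gives $\det A(\lambda)\geq0$; for such $\lambda$ one also has $1+\lambda\sigma\neq0$, so $\det A(\lambda)\neq0$ and hence $\det A\cdot(1+\lambda\sigma)>0$.

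Finally, for large $\lambda$ the sign of $1+\lambda\sigma$ matches that of $\sigma$, so $\det A$ and $\sigma$ share a common sign, which yields $\det A>0$ when $x\geq0$ and $\det A<0$ when $x\leq0$. The only potentially delicate step is the determinant identity itself, but this is a routine application of the matrix determinant lemma; once it is in hand, the perturbation-to-Motzkin argument is essentially mechanical.
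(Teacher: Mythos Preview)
Your argument is correct, and it is genuinely different from the paper's route. After the common first paragraph (which mirrors the paper almost exactly: Lemma~\ref{lem: support} forces $[x]_{C^c}=0$, Lemma~\ref{lem: single class solution} gives $[x]_C>0$ or $[x]_C<0$, and Theorem~\ref{thm: solutions} then yields $\ker A\T=\{0\}$), the paper determines the sign of $\det A$ by constructing an explicit auxiliary matrix~$B$ in the class~\eqref{cnd: non-incr} with the same closed SCC and the same solution~$x$, whose determinant can be computed by hand, and then running a linear homotopy $(1-t)A+tB$ along which the determinant never vanishes. You instead use the rank-one deformation $A+\lambda E$ together with the matrix determinant lemma to obtain the closed formula $\det(A+\lambda E)=\det A\,(1+\lambda\,e\T x)$, and then let Motzkin's theorem fix the sign for large~$\lambda$. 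Your approach is shorter and avoids the somewhat delicate case-by-case construction of~$B$ and the column-operation determinant computations; the paper's approach, on the other hand, is fully self-contained in that it does not invoke Motzkin's theorem as an external input for the sign, and the explicit matrix~$B$ makes the mechanism visible. Both are valid; yours is the more economical of the two.
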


\begin{proof}
	Let~$C$ be the closed SCC of~$\Gc(A)$, and let~$D$ be the union of the 
	open SCCs of~$\Gc(A)$. By Lemma~\ref{lem: support}, $[x]_D = 0$ if $D$ is 
	non-empty, so $x$ must be a fundamental solution for~$C$. Lemma~\ref{lem: 
	single class solution} applied to the submatrix~$[A]_{CC}$ yields 
	$[x]_C<0$ or $[x]_C>0$. Theorem~\ref{thm: solutions} then tells us that 
	$A\T z=0$ implies $z=0$, hence $\det A\neq0$. It remains to determine the 
	sign of~$\det A$.

	We start with the case $x\geq0$. Let $C$ (the closed SCC) consist of the 
	indices $i_1,\dots,i_k$, where $i_1 <i_2 <\dots <i_k$. Define the $n\times 
	n$ matrix~$B = (b_{ij})$ by
	\[
		b_{ij} := \begin{cases}
			1 & \text{if $i\in D$ and $j=i$}; \\
			x_i^{-1} & \text{if $i=i_\ell\in C$ and $j \in \bigl\{ i_\ell, 
			[i_\ell+1], \dots, [i_{\ell+1}-1] \bigr\}$}; \\
			0 & \text{otherwise};
		\end{cases}
	\]
	where $i_{k+1}$ is to be interpreted as~$i_1$. Note that the matrix~$B$ 
	satisfies~\eqref{cnd: non-incr} by definition. Moreover, in the 
	corresponding graph~$\Gc(B)$ every vertex~$i$ has exactly one outgoing 
	edge: for $i\in D$ this edge links~$i$ to the next vertex~$[i+1]$, and for 
	$i=i_\ell \in C$ it links~$i$ to~$i_{\ell+1}$, which is the next vertex 
	in~$C$. It follows that~$C$ is the only closed SCC for the 
	matrix~$B$.

	Next, observe that the non-zero entries on the rows of~$B$ with an index 
	in~$C$ span exactly all the columns of the matrix. That is, for every 
	column index~$j$ there is exactly one row index~$i$ in~$C$ such that 
	$b_{ij}$ is non-zero, and in fact $b_{ij} = x_i^{\smash{-1}}$. From this 
	it follows immediately that
	\[
		(B\T x)_j = \sum_{i\in C} b_{ij}\,x_i = 1,
	\]
	hence $B\T x=e$. Furthermore, if $j$ is an index in~$C$, then $b_{jj}$ is 
	the only non-zero element in column~$j$ of the matrix~$B$, and if $i$ is 
	an index in~$D$, then the only non-zero element on row~$i$ is a~1 on the 
	diagonal. This means that we can bring the matrix~$B$ in lower triangular 
	form using elementary column operations that leave the diagonal elements 
	alone, and it follows that
	\[
		\det B
		= \prod_{i=1}^n b_{ii}
		= \prod_{\ell=1}^k \frac1{x_{i_\ell}}
		> 0.
	\]

	Now for $t$ in~$[0,1]$, let $A_t$ be the matrix $(1-t)A + tB$. Observe 
	that if~$t$ is strictly between 0 and~1, then there is a gap right before 
	the element~$(A_t)_{ij}$ in row~$i$ of the matrix~$A_t$ if and only if 
	there is a gap right before~$a_{ij}$ in~$A$ or right before~$b_{ij}$ 
	in~$B$. It follows that for each of the matrices~$A_t$, with $t\in[0,1]$, 
	$C$ is the only closed SCC of the graph~$\Gc(A_t)$. Clearly, it is also 
	the case that $x$ satisfies $A_t\T x = e$, so by Theorem~\ref{thm: 
	solutions} it follows that $A_t\T z=0$ implies $z=0$. Hence, $\det A_t 
	\neq0$ for each~$t$ in~$[0,1]$, and since $\det A_t$ is a continuous 
	function of~$t$, the determinants of the matrices~$A_t$ must all have the 
	same sign. In particular, $\det A>0$ because $\det B>0$.

	We now move on to the case~$x\leq0$. We can use essentially the same 
	argument as before, but because $x_i$ is now negative for $i\in C$ we need 
	to define the matrix~$B$ differently. To be specific, we now set
	\[
		b_{ij} := \begin{cases}
			-1 & \text{if $i\in D$ and $j=[i-1]$}; \\
			x_i^{-1} & \text{if $i=i_\ell\in C$ and $j \in \bigl\{ i_{\ell-1}, 
			[i_{\ell-1}+1], \dots, [i_\ell-1] \bigr\}$}; \\
			0 & \text{otherwise};
		\end{cases}
	\]
	where $i_0$ is to be interpreted as~$i_k$. In the graph~$\Gc(B)$, this 
	links every index~$i$ in~$D$ exclusively to the previous index~$[i-1]$, 
	and every index in~$C$ exclusively to the previous index in~$C$. So again, 
	$C$ is the only closed SCC for the matrix~$B$, and it is 
	also not difficult to see that $B\T x=e$ again. It then follows by the 
	same argument as before that $\det A$ and~$\det B$ have the same sign. So 
	all that remains is to compute $\det B$.

	To compute $\det B$, we first permute the columns so that column~$n$ 
	becomes the first column and each of the columns $1,\dots,n-1$ is moved 
	one position to the right. Note that this permutation can be carried out 
	using $n-1$ column exchanges, and results in the matrix~$B^*$ with entries 
	given by
	\[
		b^*_{ij} := \begin{cases}
			-1 & \text{if $i\in D$ and $j=i$}; \\
			x_i^{-1} & \text{if $i=i_\ell\in C$ and $j \in \bigl\{ 
			[i_{\ell-1}+1], [i_{\ell-1}+2], \dots, i_\ell \bigr\}$}; \\
			0 & \text{otherwise}.
		\end{cases}
	\]
	For this matrix, if $j$ is an index in~$C$ then $b^*_{\smash{jj}}$ is the 
	only non-zero element in column~$j$, and if $i$ is an index in~$D$, then 
	the only non-zero element on row~$i$ is a~$-1$ on the diagonal. So this 
	time we can bring the matrix in upper triangular form using elementary 
	column operations that leave the diagonal elements alone. We conclude that
	\[
		\det B
		= (-1)^{n-1} \det B^*
		= (-1)^{n-1} \prod_{i=1}^n b^*_{ii}
		= (-1)^{n-1} (-1)^n \prod_{\ell=1}^k \frac1{-x_{i_\ell}}
		< 0.
	\]
	This completes the proof.
\end{proof}

\begin{remark}
	Note that by Lemma~\ref{lem: Motzkin} the system $A\T x=e$ always has a 
	non-negative solution for a matrix~$A$ that satisfies both $Ae>0$ 
	and~\eqref{cnd: non-incr}. So in this case Theorems \ref{thm: singular} 
	and~\ref{thm: invertible} say that $\det A$ is zero if there is more than 
	one closed SCC and strictly positive otherwise. A similar statement can be 
	made if instead of $Ae>0$ we have $Ae<0$: in that case $\det A$ is zero if 
	there is more than one closed SCC and strictly negative otherwise.
\end{remark}

\section{Conditions for a P-matrix}
\label{sec: P-matrix}

Recall that a real $n\times n$ matrix is a $P$-matrix if all its principal 
minors are strictly positive. As we have motivated in Section~\ref{sec: 
application}, we have a special interest in non-negative matrices in the 
class~\eqref{cnd: non-incr} that are $P$-matrices. Of course, we can in 
principle determine whether such a matrix~$A$ is a $P$-matrix by applying 
Lemma~\ref{lem: Motzkin} and Theorems \ref{thm: singular} and~\ref{thm: 
invertible} to each principal submatrix of~$A$. But ideally, we would like to 
find simple conditions that are more easily verified in the setting of our 
application and imply that $A$ is a $P$-matrix. In this section, we provide 
such a condition.

We start by observing that a non-negative matrix~$A$ is a $P$-matrix if all 
inequalities in~\eqref{cnd: non-incr} are strict. Indeed, this is the original 
condition from Motzkin's Theorem (see Section~\ref{sec: main result}) for 
$\det A$ to be strictly positive. That $A$ is actually a $P$-matrix follows 
because every principal submatrix of~$A$ clearly satisfies the same condition. 
In fact, our main results show that it is sufficient to require only that the 
first inequality in~\eqref{cnd: non-incr} is strict. That is, we claim that a 
non-negative $n\times n$ matrix~$A$ is a $P$-matrix if it satisfies
\begin{equation}
	\condition{cnd: strong Motzkin}
	a_{ii} > a_{i[i+1]} \geq a_{i[i+2]} \geq \dots \geq a_{i[i+n-1]},
	\qquad i=1,\dots,n.
\end{equation}
To see this, note that such a matrix has strictly positive row sums and the 
graph~$\Gc(A)$ has only one closed SCC, because there is an edge from every 
vertex~$i$ to the next vertex~$[i+1]$. So $\det A>0$ by Lemma~\ref{lem: 
Motzkin} and Theorem~\ref{thm: invertible}, and $A$ is a $P$-matrix because 
every principal submatrix also satisfies~\eqref{cnd: strong Motzkin}.

But we can do better. Our next theorem provides a condition for a matrix~$A$ 
to be a $P$-matrix that covers the special cases discussed above.

\begin{theorem}
	\label{thm: P-matrix}
	A non-negative real $n\times n$ matrix~$A$ is a $P$-matrix if, in addition 
	to $Ae>0$ and~\eqref{cnd: non-incr}, it also satisfies the following 
	condition:
	\begin{equation}
		\condition{cnd: pre-k gaps}
		\parbox[t]{.9\textwidth}{There is an index~$k$ such that $a_{rr} > 
		a_{rk}$ for all rows~$r$ other than~$k$.}
	\end{equation}
\end{theorem}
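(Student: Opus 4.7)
The plan is to verify, for every non-empty $I \subseteq \{1,\dots,n\}$, that the principal submatrix $A_I := [A]_{II}$ satisfies the hypotheses of the Remark after Theorem~\ref{thm: invertible}: non-negativity, the cyclic monotonicity~\eqref{cnd: non-incr}, strictly positive row sums, and exactly one closed SCC in~$\Gc(A_I)$. The first two are inherited trivially (any subsequence of a cyclically weakly decreasing sequence is still weakly decreasing in the cyclic order on~$I$), so the substance lies in the last two, both of which reduce to choosing a good substitute inside~$I$ for the distinguished index~$k$ from~\eqref{cnd: pre-k gaps}.

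I would pick this substitute as follows: set $k' = k$ if $k \in I$, and otherwise let $k'$ be the first element of~$I$ encountered when traversing $\{1,\dots,n\}$ cyclically forward from~$k$. By the choice of~$k'$, no element of~$I$ lies strictly between $k$ and $k'$ in the cyclic order, so for every $r \in I$ with $r \neq k'$ a short computation with the numbers $d(\cdot,\cdot)$ yields $d(r,k) \leq d(r,k')$, whence $a_{rk} \geq a_{rk'}$ by~\eqref{cnd: non-incr}. Combined with~\eqref{cnd: pre-k gaps}, this produces the pivotal inequality $a_{rr} > a_{rk} \geq a_{rk'}$ for every $r \in I \setminus \{k'\}$, and in particular $a_{rr} > 0$ for all such~$r$, so their row sums in~$A_I$ are positive. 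For the remaining row $r = k'$, note that $a_{kk}$ is the maximum of row~$k$ of~$A$ by~\eqref{cnd: non-incr}, so the non-negativity of~$A$ together with $(Ae)_k > 0$ forces $a_{kk} > 0$; and if $k' \neq k$ the pivotal inequality applied to $r = k'$ also gives $a_{k'k'} > 0$. Hence every row sum of $A_I$ is strictly positive.

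It remains to show that $\Gc(A_I)$ has a unique closed SCC. For every $r \in I \setminus \{k'\}$, the strict decrease from $a_{rr}$ to $a_{rk'}$ along row~$r$ of~$A_I$ (whose entries, read cyclically on~$I$ starting from~$r$, form a weakly decreasing sequence by the inherited~\eqref{cnd: non-incr}) must occur between two consecutive $I$-columns lying cyclically between~$r$ (exclusive) and~$k'$ (inclusive). This yields an edge in~$\Gc(A_I)$ from~$r$ to an $I$-vertex cyclically closer to~$k'$; iterating produces a path from~$r$ to~$k'$. A standard argument then shows that the SCC of~$k'$ is closed and is the only one: any closed SCC must contain~$k'$, because a path into~$k'$ from within it cannot leave it, and distinct SCCs are disjoint. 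Applying the Remark after Theorem~\ref{thm: invertible} to $A_I$ gives $\det A_I > 0$, so $A$ is a $P$-matrix. The main bookkeeping obstacle is the case $k \notin I$: one must choose~$k'$ so that the pivotal inequality survives the restriction, and then verify positivity of the $k'$-th row sum separately, which turns on extracting $a_{kk} > 0$ from $Ae > 0$ and~\eqref{cnd: non-incr}.
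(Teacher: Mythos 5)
Your proposal is correct and is essentially the paper's proof: the same pivotal inequality chain $a_{rr}>a_{rk}\geq a_{rk'}$ (the paper's $a_{rr}>a_{rn}\geq a_{r1}$ in its $k=n$ case), the same stepwise construction of a path to the distinguished vertex showing $\Gc(A_I)$ has a unique closed SCC, and the same final appeal to Lemma~\ref{lem: Motzkin} and Theorem~\ref{thm: invertible}. The only difference is organizational: you verify the hypotheses for an arbitrary principal submatrix in one step by choosing $k'$ as the first element of $I$ cyclically after $k$, whereas the paper deletes one component at a time and checks that $Ae>0$, \eqref{cnd: non-incr} and \eqref{cnd: pre-k gaps} are preserved.
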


\begin{proof}
	The proof consists of two parts: in the first part we show that 
	\eqref{cnd: non-incr} and~\eqref{cnd: pre-k gaps} together imply that 
	$\Gc(A)$ has exactly one closed SCC, so that we have $\det A>0$ by 
	Lemma~\ref{lem: Motzkin} and Theorem~\ref{thm: invertible}; in the second 
	part we show that for a non-negative matrix the properties $Ae>0$, 
	\eqref{cnd: non-incr} and~\eqref{cnd: pre-k gaps} are preserved when we 
	remove one component (i.e., a row and the corresponding column) of the 
	matrix. The two parts combined establish that all principal minors of~$A$ 
	are strictly positive.

	For the first part, suppose $A$ satisfies conditions \eqref{cnd: non-incr} 
	and~\eqref{cnd: pre-k gaps}, and consider the graph~$\Gc(A)$. Let $r$ be a 
	row index other than the index~$k$ from condition~\eqref{cnd: pre-k gaps}. 
	We claim that there is a path in~$\Gc(A)$ from $r$ to~$k$. Indeed, we can 
	construct such a path step by step, as follows. We first set $r_0 := r$. 
	Now assume that after $m$~steps we have obtained a path~$(r_0, \dots, 
	r_m)$ that does not yet visit vertex~$k$. Then by~\eqref{cnd: pre-k gaps} 
	there must be some index~$i$ with $d(i,k) < d(r_m,k)$ such that there is a 
	gap right before the element~$a_{r_mi}$ on row~$r_m$ of~$A$. We set 
	$r_{m+1} := i$, and since there is an edge in~$\Gc(A)$ from $r_m$ to~$i$, 
	we can append this vertex to our path. Continuing like this, the path must 
	eventually visit~$k$ because each added vertex is strictly closer to~$k$ 
	than the previous vertex. So there is a path to~$k$ from every other 
	vertex of~$\Gc(A)$, hence every vertex is either in an open SCC or in the 
	SCC that contains~$k$. It follows that the SCC containing~$k$ is the only 
	closed SCC of~$\Gc(A)$.

	For the second part of the proof, suppose the matrix~$A$ is non-negative 
	and satisfies $Ae>0$, \eqref{cnd: non-incr} and~\eqref{cnd: pre-k gaps}. 
	We claim that these properties are preserved when we remove one component 
	of the matrix. Without loss of generality we can assume we remove 
	component~$n$, provided we allow the index~$k$ from condition~\eqref{cnd: 
	pre-k gaps} to be arbitrary. Let $A'$ be the obtained submatrix. It is 
	easy to see that $A'$ is non-negative, has strictly positive row sums, and 
	satisfies~\eqref{cnd: non-incr}; our only concern is whether 
	condition~\eqref{cnd: pre-k gaps} is preserved. In the case $k<n$ this is 
	clearly true because for $r = 1,\dots,n-1$, both $a_{rr}$ and~$a_{rk}$ are 
	elements of the submatrix~$A'$. It remains to consider the case in which 
	$A$ satisfies condition~\eqref{cnd: pre-k gaps} (only) for $k=n$. But in 
	this case the submatrix~$A'$ satisfies~\eqref{cnd: pre-k gaps} with $k$ 
	replaced by~$1$ because for $r = 2,\dots,n-1$, we have $a_{rr} > a_{rk} = 
	a_{rn} \geq a_{r1}$, using the fact that $A$ satisfies~\eqref{cnd: 
	non-incr}.
\end{proof}

\begin{remark}
	Inspection of the proof reveals that the assumption that $A$ is 
	non-negative in Theorem~\ref{thm: P-matrix} can be weakened to the 
	assumption that for every row of~$A$, the sum of the diagonal entry and 
	all negative entries on that row is strictly positive. Furthermore, the 
	assumptions $A\geq0$ and~$Ae>0$ are only used in the proof to determine 
	the sign of every principal minor; if we replace them by the single 
	assumption that $A$ is a strictly negative matrix, then the proof of the 
	theorem still goes through but shows that all principal minors of~$A$ are 
	strictly negative, i.e., that the matrix $-A$ is a $P$-matrix.
\end{remark}

We close off this section with a brief discussion of conditions that are both 
necessary and sufficient for a non-negative matrix~$A$ in the 
class~\eqref{cnd: non-incr} with positive row sums to be a $P$-matrix. We 
first note that if $A$ has a constant row, then the condition~\eqref{cnd: 
pre-k gaps} is in fact necessary (as well as sufficient) for~$A$ to be a 
$P$-matrix. Indeed, suppose $A$ is a $P$-matrix, let $k$ be the constant row, 
and let $r$ be any row other than~$k$. Then it must be the case that $a_{rr} > 
a_{rk}$, because the alternative is that the $2\times2$ submatrix formed by 
the entries $a_{rr}, a_{rk}$ and $a_{kr}, a_{kk}$ has two constant rows, hence 
determinant zero.

Condition~\eqref{cnd: pre-k gaps} is however not necessary in general for~$A$ 
to be a $P$-matrix. For example, suppose the dimension~$n$ is at least~3 and 
the matrix~$A$ satisfies
\[
	a_{ii} = a_{i[i+1]} > a_{i[i+2]} > \dots > a_{i[i+n-1]} \geq 0
	\qquad i=1,\dots,n.
\]
Then $A$ does not satisfy~\eqref{cnd: pre-k gaps} because $a_{rr} = a_{rk}$ if 
$r = [k-1]$. But $A$ is a $P$-matrix: clearly, every $2\times2$ principal 
minor is strictly positive, and it is easy to see that for every principal 
submatrix~$A'$ of dimension at least $3\times 3$, the row sums are strictly 
positive and the corresponding graph~$\Gc(A')$ has only one SCC, so that $\det 
A'>0$ by Lemma~\ref{lem: Motzkin} and Theorem~\ref{thm: invertible}.

Nonetheless, our observation about condition~\eqref{cnd: pre-k gaps} does 
allow us to formulate a condition that is necessary for a generic non-negative 
$n\times n$ matrix~$A$ in the class~\eqref{cnd: non-incr} with positive row 
sums to be a $P$-matrix: For any row~$r$ of the matrix, let $k_r$ denote the 
largest number in~$\{0,1, \dots, n-1\}$ such that $a_{rr} = a_{r[r+k_r]}$, and 
let $A_r$ be the principal submatrix consisting of the rows and columns of~$A$ 
with indices $r, [r+1], \dots, [r+k_r]$. Then $A_r$ has a constant row. So for 
$A$ to be a $P$-matrix, it is necessary that condition~\eqref{cnd: pre-k gaps} 
holds for each of the principal submatrices~$A_r$, that is, $A$ has to 
satisfy
\begin{equation}
	\condition{cnd: necessary}
	\parbox[t]{.9\textwidth}{For every row~$r$ such that~$k_r>0$, $a_{ss} > 
	a_{sr}$ for $s = [r+1], \dots, [r+k_r]$.}
\end{equation}

The following matrix satisfies~\eqref{cnd: necessary} but is not a 
$P$-matrix:
\[
	\begin{bmatrix}       \,
		2 & 2 & 1 & 1 & 0 \\
		0 & 1 & 1 & 0 & 0 \\
		0 & 0 & 1 & 1 & 1 \\
		1 & 0 & 0 & 1 & 1 \\
		1 & 0 & 0 & 0 & 1 \,
	\end{bmatrix}
\]
So although~\eqref{cnd: necessary} is necessary for a generic non-negative 
matrix~$A$ satisfying $Ae>0$ and~\eqref{cnd: non-incr} to be a $P$-matrix, it 
is not sufficient. We leave finding a simple condition that is both necessary 
and sufficient as an open problem.

\section{Conclusions}
\label{sec: conclusions}

To conclude, we summarize our findings. For a real $n\times n$ matrix~$A$ 
satisfying condition~\eqref{cnd: non-incr}, we explicitly described the set of 
solutions to the system~$A\T x = \lambda\,e$ in terms of fundamental solutions 
and null vectors. This led to our main result, a characterization of the sign 
of the determinant of~$A$ in terms of the number of closed SCCs of the 
graph~$\Gc(A)$ associated with~$A$ and the solutions to the system~$A\T x = 
e$. We also showed that if the row sums of~$A$ are all strictly positive or 
all strictly negative, then the number of closed SCCs determines~$\sgn(\det 
A)$. Finally, we provided an easy-to-verify sufficient condition under which a 
non-negative matrix~$A$ in the class~\eqref{cnd: non-incr} is a $P$-matrix. 
The problem of finding a simple condition that is both necessary and 
sufficient for~$A$ to be a $P$-matrix remains open.

\section*{Appendix}

For the application discussed in Section~\ref{sec: application}, we are 
especially interested in the case in which the matrix~$A$ 
satisfies~\eqref{cnd: non-incr} and $Ae > 0$, and the graph~$\Gc(A)$ has a 
unique closed SCC. This case is covered by Theorem~\ref{thm: invertible} 
together with Lemma~\ref{lem: Motzkin}. However, for this special case we also 
have a more direct proof of the fact that $\det A>0$, which is essentially a 
generalization of Motzkin's original argument~\cite{MR0223384}* {proof of 
Theorem~8}. We present this alternative proof below, as it may be of 
independent interest.

We need extra notation. Given a matrix~$A$ satisfying~\eqref{cnd: non-incr}, 
we first define
\[
	\Gpre_r
	:= \bigl\{\, k\in \{1,\dots,n\}: a_{rk} > a_{r[k+1]} \,\bigr\},
	\qquad r = 1,\ldots,n.
\]
This is the set of all vertices~$k$ in the graph~$\Gc(A)$ such that there is a 
directed edge from vertex~$r$ to vertex~$[k+1]$. We then define
\[
	\textstyle
	\Gpre_{r,0} := \{\, [r-1] \,\}, \qquad
	\Gpre_{r,t+1} := \bigcup_{s\in \Gpre_{r,t}} \Gpre_{[s+1]}
		\quad \text{for $t\geq0$},
\]
so that $\Gpre_{r,t}$ is the set of all vertices~$k$ such that there is path 
of length~$t$ (allowing $t=0$) in~$\Gc(A)$ from vertex~$r$ to vertex~$[k+1]$. 
Lastly, we set
\[
	\textstyle
	\Gpre_{r,\infty} := \bigcup_{t=0}^\infty \Gpre_{r,t}.
\]

Observe that for every vertex~$r$, the set $\{\, [k+1] \colon k \in 
\Gpre_{r,\infty} \}$ necessarily contains a closed SCC of~$\Gc(A)$. Therefore, 
whenever there are two indices $r$ and~$s$ for which $\Gpre_{r,\infty}$ 
and~$\Gpre_{s,\infty}$ are disjoint, the graph~$\Gc(A)$ must have two disjoint 
closed SCCs. On the other hand, if $\Gc(A)$ has at least two closed SCCs, then 
we can find two indices~$r$ and~$s$ such that $\Gpre_{r,\infty}$ 
and~$\Gpre_{s,\infty}$ are disjoint. In other words, $\Gc(A)$ has exactly one 
closed SCC if and only if the following condition holds:
\begin{equation}
	\condition{cnd: connected}
	\textstyle
	\Gpre_{r,\infty} \cap \Gpre_{s,\infty} \neq \empty
		\qquad \text{for any $r,s \in \{1,\dots,n\}$}.
\end{equation}
We use this condition to prove the following theorem:

\begin{theorem}
	\label{thm: invertible again}
	If the real $n\times n$ matrix~$A$ satisfies $Ae>0$, \eqref{cnd: non-incr} 
	and~\eqref{cnd: connected}, then it has a strictly positive determinant.
\end{theorem}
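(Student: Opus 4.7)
The plan is to mimic Motzkin's rotation argument from the proof of Lemma~\ref{lem: Motzkin}, but to run it simultaneously at a maximizer and a minimizer of the cyclic partial sums of~$z$, using the connectedness hypothesis~\eqref{cnd: connected} to force the two analyses to meet. This establishes non-singularity, and a simple continuous deformation to the identity matrix then fixes the sign.

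To prove non-singularity, suppose $Az=0$. Lemma~\ref{lem: Motzkin} gives $e\T z=0$, so $\zbar=0$ and the vector~$y$ of equation~\eqref{eqn: Motzkin} coincides with~$z$. The second and third terms on the right of~\eqref{eqn: Motzkin} now vanish (respectively because $\sum_{i=1}^n z_{[r+i]}=0$ and because $\zbar=0$), leaving
\[
	(Az)_{[r+1]}
	= \sum_{m=1}^{n-1}
		\bigl( a_{[r+1][r+m]} - a_{[r+1][r+m+1]} \bigr)
		\sum_{i=1}^m z_{[r+i]}.
\]
Set $S_r := \sum_{i=1}^r z_i$, so $S_0=S_n=0$, and let $R$ be the set of maximizers of~$S$. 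For $r\in R$ the identity $\sum_{i=1}^m z_{[r+i]} = S_{[r+m]} - S_r$ (handling the wrap via $S_n=S_0=0$) makes every partial sum non-positive, while the coefficients are non-negative by~\eqref{cnd: non-incr} and the total equals zero; hence each summand vanishes. So whenever $[r+m] \in \Gpre_{[r+1]}$ (i.e., $\Gc(A)$ has an edge $[r+1]\to[r+m+1]$), we obtain $S_{[r+m]}=S_r$ and therefore $[r+m]\in R$. Equivalently, the shifted image $R' := \{[r+1] : r\in R\}$ is closed under the out-edges of~$\Gc(A)$. Exactly the same argument run at a minimizer of~$S$ produces another non-empty out-edge-closed set~$R'_{\min}$.

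Any non-empty out-edge-closed subset of a finite digraph contains a closed SCC. By~\eqref{cnd: connected}, $\Gc(A)$ has a unique closed SCC, so both $R'$ and $R'_{\min}$ contain it and therefore intersect. This forces $\max S = \min S$, so $S$ is constant; combined with $S_0=S_n=0$ this yields $z=0$, so $A$ is non-singular. To pin down the sign, I would interpolate via $A_t := (1-t)A + tI$ for $t\in[0,1]$. A routine check shows each $A_t$ satisfies~\eqref{cnd: non-incr} and $A_t e>0$, and for $t>0$ the diagonal boost adds all edges $i\to[i+1]$ to $\Gc(A_t)$, making it strongly connected, so~\eqref{cnd: connected} holds trivially (and at $t=0$ by assumption). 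The non-singularity argument therefore applies to every~$A_t$, and continuity forces $\det A_t$ to have constant sign on $[0,1]$; since $\det A_1 = 1 > 0$, we conclude $\det A > 0$.

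The main obstacle is spotting the symmetric use of max and min. Motzkin's original argument used the strict inequality $\zbar<0$ to derive a contradiction from $e\T z<0$, but here $\zbar=0$ and those terms disappear, so one must instead extract information out of the maximizer \emph{and} the minimizer and let~\eqref{cnd: connected} force the two extremal sets to overlap. Once this idea is in hand, the cyclic bookkeeping behind $\sum_{i=1}^m z_{[r+i]} = S_{[r+m]} - S_r$, the pigeonhole via~\eqref{cnd: connected}, and the deformation to~$I$ are all routine.
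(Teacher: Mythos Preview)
Your proof is correct and follows essentially the same route as the paper's: both use the Motzkin rotation identity at a maximizer and a minimizer of the partial sums~$S_r$, invoke~\eqref{cnd: connected} to force the two extremal sets to meet, and then deform to the identity via $A_t=(1-t)A+tI$ to fix the sign. The only difference is cosmetic: the paper spells out the propagation from a single maximizer as an explicit induction along path lengths (showing that every index in $\Gpre_{[r+1],\infty}$ is again a maximizer, and then applying~\eqref{cnd: connected} directly as an intersection condition on the sets $\Gpre_{\cdot,\infty}$), whereas you package the same step as ``$R'$ is out-edge-closed and hence contains the unique closed SCC.''
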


\begin{proof}
	Let $z$ be a vector such that $Az=0$. Our first goal is to prove that this 
	implies $z=0$, hence $\det A\neq0$. By Lemma~\ref{lem: Motzkin}, $e\T z = 
	z_1+\dots+z_n = 0$. Just as in equation~\eqref{eqn: Motzkin} from the 
	proof of Lemma~\ref{lem: Motzkin} this allows us to write the equality 
	$(Az)_{[r+1]}=0$, for any row~$[r+1]$, as
	\begin{equation}
		\label{eq: invertible}
		\sum_{i=1}^n a_{[r+1][r+i]} z_{[r+i]}
		= \sum_{m=1}^{n-1} \bigl( a_{[r+1][r+m]} - a_{[r+1][r+m+1]} \bigr)
			\sum_{i=1}^m z_{[r+i]}
		= 0.
	\end{equation}

	Now suppose for the moment that $\sum_{i=1}^m z_i \leq0$ for $m = 
	1,\dots,n$. We claim that it is then the case that for every $t\geq0$ and 
	all~$\ell$ in the set~$\Gpre_{1,t}$,
	\begin{equation}
		\label{eq: gap induction}
		\sum_{i=1}^{\ell} z_i = 0
		\qquad\text{and}\qquad
		\sum_{i=1}^m z_{[\ell+i]} \leq0
		\quad \text{for $m=1,\dots,n$}.
	\end{equation}
	The proof is by induction in~$t$. For the base case, observe that $\ell = 
	n$ is the only element of~$\Gpre_{1,0}$, and for this value of~$\ell$ we 
	already know that~\eqref{eq: gap induction} is true. For the induction 
	step, take $t\geq1$ and let $\ell$ be an element of~$\Gpre_{1,t}$. By the 
	definition of~$\Gpre_{1,t}$, this means that there is an~$s$ in the 
	set~$\Gpre_{1,t-1}$ such that $\ell$ is an element of~$\Gpre_{[s+1]}$. By 
	the induction hypothesis, we then have
	\[
		\sum_{i=1}^s z_i = 0
		\qquad\text{and}\qquad
		\sum_{i=1}^m z_{[s+i]} \leq 0
		\quad \text{for $m=1,\dots,n$}.
	\]
	Furthermore, if we take~$u$ to be the index in~$\{1,\dots,n\}$ such that 
	$[s+u] = \ell$, then by the definition of~$\Gpre_{[s+1]}$ we also have
	\[
		a_{[s+1][s+u]} > a_{[s+1][s+u+1]}.
	\]
	But in view of \eqref{cnd: non-incr} and~\eqref{eq: invertible} (which 
	holds for any~$r$, so in particular for $r=s$), this implies that 
	$\sum_{i=1}^u z_{[s+i]} = 0$. Using again that $\sum_{i=1}^n z_i = 0$, it 
	follows from this that both in the case $s+u \leq n$ and in the case $s+u 
	> n$,
	\[
		\sum_{i=1}^{\ell} z_i
		= \sum_{i=1}^s z_i + \sum_{i=1}^u z_{[s+i]}
		= 0.
	\]
	This in turn then implies that
	\[
		\sum_{i=1}^m z_{[\ell+i]}
		= \sum_{i=1}^\ell z_i + \sum_{i=1}^m z_{[\ell+i]}
		= \sum_{i=1}^{\ell+m} z_{[i]}
		\leq 0
		\quad \text{for $m = 1,\dots,n$}.
	\]
	This completes the induction step, and establishes~\eqref{eq: gap 
	induction} for all~$\ell$ in~$\Gpre_{1,\infty}$.

	We have just proven that if $z_1+\dots+z_m \leq 0$ for $m = 1,\dots,n$, 
	then $z_1+\dots+z_\ell = 0$ for all~$\ell$ in the set~$\Gpre_{1,\infty}$. 
	But clearly, there is nothing special about the row index~1; using the 
	same argument we can draw an analogous conclusion for  any row of the 
	matrix. That is, for every~$r \in \{1,\dots,n\}$ we can conclude that if 
	we have $\sum_{i=1}^m z_{[r+i]} \leq0$ for $m=1,\dots,n$, then
	\[
		\sum_{i=1}^\ell z_{[r+i]} = 0 \quad
		\text{for all~$\ell \in \{1,\dots,n\}$ such that $[r+\ell] \in 
		\Gpre_{[r+1],\infty}$}.
	\]
	Moreover, by applying the argument to the vector~$-z$ instead of~$z$, we 
	see that we can draw the same conclusion if $\sum_{i=1}^m z_{[r+i]} \geq0$ 
	for $m=1,\dots,n$.

	Now, let $r$ and~$s$ be distinct indices that respectively maximize 
	$\sum_{i=1}^r z_i$ and minimize $\sum_{i=1}^s z_i$. As we showed in the 
	proof of Lemma~\ref{lem: Motzkin}, this implies
	\[
		\sum_{i=1}^m z_{[r+i]} \leq 0 \quad\text{and}\quad
		\sum_{i=1}^m z_{[s+i]} \geq 0 \quad\text{for $m=1,\dots,n$}.
	\]
	Therefore, since $\Gpre_{\smash{ [r+1],\infty }}$ and~$\Gpre_{\smash{ 
	[s+1],\infty }}$ have at least one element~$k$ in common, we can conclude 
	that there exist $u$ and~$v$ in $\{1,\dots,n\}$ such that
	\[
		\sum_{i=1}^u z_{[r+i]} = \sum_{i=1}^v z_{[s+i]} = 0
	\]
	and $[r+u] = [s+v] = k$. But since $\sum_{i=1}^n z_i = 0$, we also have
	\[
		\sum_{i=1}^k z_i
		= \sum_{i=1}^{n+k} z_{[i]}
		= \sum_{i=1}^r z_i + \sum_{i=1}^u z_{[r+i]}
		= \sum_{i=1}^s z_i + \sum_{i=1}^v z_{[s+i]}.
	\]
	It follows that $\sum_{i=1}^r z_i = \sum_{i=1}^s z_i$. By the definition 
	of $r$ and~$s$ and the fact that $\sum_{i=1}^n z_i = 0$, we conclude that 
	$z=0$, hence $\det A \neq 0$.

	It remains to show that the determinant of~$A$ cannot be negative. To this 
	end, we consider the family of matrices $A_t := (1-t)A + tI$, where $t\in 
	[0,1]$ and~$I$ is the $n\times n$ identity matrix. Observe that for every 
	strictly positive value of~$t$, the diagonal element is strictly the 
	largest element on every row of the matrix~$A_t$. It follows that in the 
	directed graph~$\Gc(A_t)$ every vertex~$r$ has a directed edge to 
	vertex~$[r+1]$, so that $\Gpre_{r,\infty} = \{1,\dots,n\}$. So by the 
	result we have just proven, all of the matrices~$A_t$ have non-zero 
	determinant. Since $\det A_t$ is a continuous function of~$t$ and $\det 
	I=1$, it follows that $\det A_t > 0$ for all~$t$ in~$[0,1]$. In 
	particular, $\det A > 0$.
\end{proof}

\begin{bibdiv}
	\begin{biblist}
	\bib{MR0223384}{article}{
		author={Motzkin, T.\,S.},
		title={Signs of minors},
		conference={
			title={Inequalities (Proc.\ Sympos.\ Wright--Patterson Air Force 
			Base,Ohio, 1965)},},
		book={publisher={Academic Press, New York},},
		date={1967},
		pages={225--240},
		review={\MR{0223384}},
		}

	\bib{MR1674216}{article}{
		author={Rousseau, G.},
		title={A sufficient condition for a non-negative matrix to have
		non-negative determinant},
		journal={Linear and Multilinear Algebra},
		volume={44},
		date={1998},
		number={2},
		pages={179--185},
		issn={0308-1087},
		review={\MR{1674216}},
		doi={10.1080/03081089808818557},
		}

	\bib{storm22}{article}{
		title={Stability of a Stochastic Ring Network},
		author={Storm, P.\,J.},
		author={Kager, W.},
		author={Mandjes, M.\,R.\,H.},
		author={Borst, S.\,C.},
		journal={arXiv:2206.07006},
		date={2022}
		}

	\bib{kager22}{article}{
		author={Kager, W.},
		author={Storm, P.\,J.},
		title={On the solutions of the throughput equations for ring-topology 
		queueing networks in the unstable regime},
		date={2022},
		note={Working paper}
		}

	\bib{MR444681}{article}{
		author={Plemmons, R.\,J.},
		title={$M$-matrix characterizations. I. Nonsingular $M$-matrices},
		journal={Linear Algebra Appl.},
		volume={18},
		date={1977},
		number={2},
		pages={175--188},
		issn={0024-3795},
		review={\MR{444681}},
		doi={10.1016/0024-3795(77)90073-8},
		}

	\bib{Ko1998}{article}{
		author={Komornik, V.},
		title={A simple proof of Farkas' lemma},
		journal={Amer. Math. Monthly},
		volume={105},
		date={1998},
		number={10},
		pages={949--950},
		issn={0002-9890},
		review={\MR{1656951}},
		doi={10.2307/2589288},
		}
	\end{biblist}
\end{bibdiv}

\end{document}